\newtheorem{theorem}{Theorem}
\newtheorem{prop}{Proposition}[section]
\newtheorem{lem}{Lemma}[section]
\newtheorem*{df}{Definition}
\newtheorem{rem}{Remark}[section]
\numberwithin{equation}{section}
\DeclareMathOperator{\AP}{AP}
\renewcommand{\pmod}[1]{\allowbreak\mkern7mu({\operator@font mod}\,\,#1)}
\renewcommand{\d}{\delta}
\newcommand{\eps}{\varepsilon}
\newcommand{\D}{\mathcal D}
\newcommand{\E}{\mathbb{E}}
\renewcommand{\l}{\lambda}
\newcommand{\N}{\mathbb{N}}
\renewcommand{\o}{\omega}
\renewcommand{\P}{\mathbb P}
\newcommand{\s}{\sigma}
\newcommand{\U}{\mathcal U}
\newcommand{\V}{\mathcal V}
\newcommand{\Z}{\mathbb{Z}}
\renewcommand{\b}{\mathbf b}
\newcommand{\e}{\mathbf e}
\newcommand{\n}{\mathbf n}
\renewcommand{\S}{\mathbf S}
\newcommand{\bAP}{\mathbf{AP}}
\newcommand{\bl}{\boldsymbol{\lambda}}
\newcommand{\Q}{\mathcal Q}
\newcommand{\bQ}{\boldsymbol{\mathcal Q}}
\newcommand{\mE}{\mathcal E}
\newcommand{\bmE}{\boldsymbol{\mathcal E}}
\newcommand{\mF}{\mathcal F}
\newcommand{\bmF}{\boldsymbol{\mathcal F}}
\newcommand{\HH}{\mathfrak H}
\renewcommand{\leq}{\leqslant}
\renewcommand{\geq}{\geqslant}
\begin{document}

\title{Prime avoiding numbers is a basis of order $2$}
\author{Mikhail R. Gabdullin}
\date{}
\address{Steklov Mathematical Institute,
		Gubkina str., 8, Moscow, Russia, 119991}
\email{gabdullin.mikhail@yandex.ru}

\begin{abstract}
For a positive integer $n$, we denote by $F(n)$ the distance from $n$ to the nearest prime number. We prove that every sufficiently large positive integer $N$ can be represented as the sum $N=n_1+n_2$, where
$$
F(n_i) \geq (\log N)(\log\log N)^{1/325565},	
$$
for $i=1,2$. This improves the corresponding ``trivial'' statement where only $F(n_i)\gg \log N$ is required.
\end{abstract}

\date{\today}
\maketitle

\section{Introduction}\label{sec1} 

Let $p_n$ be the $n^{th}$ prime and 
$$
G(X)=\max_{p_{n+1}\leq X}(p_{n+1}-p_n)
$$
denote the largest gap between consecutive primes up to $X$. The Prime Number Theorem together with a simple averaging argument implies that $G(X)\geq (1+o(1))\log X$, and Rankin \cite{Rankin} in 1938 was the first to prove the bound of the type
$$
G(X)\geq(c+o(1))\frac{\log X \log\log X\log\log\log\log X}{(\log\log\log X)^2},
$$ 
improving the previous results of Westzynthius \cite{West} and Erd\H{o}s \cite{Erdos1}. Rankin proved the mentioned bound with $c=1/3$, and for about next 80 years this constant was increased many times, the last being $c=2e^{\gamma}$ due to Pintz \cite{Pintz}. In 2016, Ford, Green, Konyagin, Tao \cite{Large1} and independently Maynard \cite{Large2} showed by different approaches that
$c$ can be taken arbitrarily large, giving the affirmative answer for a long-standing conjecture of Erd\H{o}s \cite{Erdos2}. In 2018 all these five authors together, by combining the ideas from \cite{Large1} and \cite{Large2}, made a further breakthrough  \cite{Long} establishing that
\begin{equation}\label{0.1}
G(X)\gg \frac{\log X \log\log X \log\log\log\log X}{\log\log\log X}.
\end{equation}
The expected size of $G(X)$ is of order $(\log X)^2$: see \cite{Cramer} for the precise conjecture of Cram\'er based on a probabilistic model of primes, and \cite{Granv} for its refinement. We note that the best known upper bound for $G(X)$ is $G(X)\ll X^{0.525}$ due to Baker, Harman and Pintz \cite{BHP}, and refer the reader to the paper \cite{Long} for further discussion of the quantity $G(X)$.

In this paper we consider the following additive problem related to the large prime gaps. For a positive integer $n$, let $F(n)$ denote  the distance from $n$ to the nearest prime number; clearly, the maximum value of $F(n)$ taken over all $n\leq X$ has the same order as $G(X)$. Can we prove that any large positive integer $N$ can be represented as the sum of two numbers $n_1$ and $n_2$, where both $F(n_1)$ and $F(n_2)$ are large in terms of $N$? The first instinct to prove such a result may be to use the technique from \cite{Long} (which, in general, follows the strategy from the previous papers, starting from the one of Westzynthius). However, there is a general obstacle which makes this idea unfit for our setting. In that approach, one exploits a ``smooth'' number $m$ which is divided by all small primes up to some (relatively small) $z$ to make sure that the majority of $G(X)$ numbers starting from $m+2$ has a small prime factor; the goal then is to use larger primes to sieve out the remaining numbers and make this procedure as efficient as possible. But if we take $n_1$ and $n_2$ according to this construction, then they are both close to such smooth numbers, and then their sum (which we want to be $N$) is also close to a smooth number, and thus is not arbitrary. Thus one needs to use a completely different method to attack the posed problem.

Firstly, it turns out that some standard technique allows to establish the following ``trivial'' statement. Note that the Prime Number Theorem implies that the average value of $F(n)$ (taken over $n\leq N$) is at least of order $\log N$. 

\begin{prop}\label{prop1}
Every sufficiently large positive integer $N$ can be represented as the sum $N=n_1+n_2$, where $F(n_i)\gg \log N$, $i=1,2$. 	
\end{prop}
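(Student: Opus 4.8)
The plan is to deduce the proposition from the ``standard'' fact --- itself a quick consequence of the Prime Number Theorem --- that more than half of the integers $n\le N$ satisfy $F(n)>\eps\log N$ for a fixed small $\eps$, and then to apply a one-line pigeonhole argument to the pair $(n_1,\,N-n_1)$. Concretely: if $n\le N$ has $F(n)\le K$ then $n\in[p-K,p+K]$ for some prime $p\le N+K$, so
\[
\#\{n\le N:\ F(n)\le K\}\ \leq\ (2K+1)\,\pi(N+K);
\]
with $K=\eps\log N$ and $\pi(N+K)=(1+o(1))N/\log N$ this gives $\#\{n\le N:\ F(n)\le\eps\log N\}\leq(2\eps+o(1))N$. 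Hence the set $U_N=\{n\in\{1,\dots,N-1\}:\ F(n)>\eps\log N\}$ has $|U_N|\geq(1-2\eps-o(1))N$. (The remark in the text that the average of $F$ over $n\le N$ is $\gg\log N$ is only motivation: the average alone would be compatible with $F$ being concentrated on a thin set of $n$ on which it is as large as $G(2N)$, so it is the covering bound above, not the average, that is actually used.)

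Next I would exploit the symmetry $n\mapsto N-n$. Set $V_N=\{n\in\{1,\dots,N-1\}:\ F(N-n)>\eps\log N\}$; since $m=N-n$ runs bijectively over $\{1,\dots,N-1\}$ we get $|V_N|=|U_N|\geq(1-2\eps-o(1))N$, and therefore
\[
|U_N\cap V_N|\ \geq\ |U_N|+|V_N|-(N-1)\ \geq\ (1-4\eps-o(1))N .
\]
Choosing, say, $\eps=\tfrac15$ makes the right-hand side $\gg N$ for all sufficiently large $N$, so $U_N\cap V_N$ is nonempty. Any $n_1$ in this set, together with $n_2=N-n_1\in\{1,\dots,N-1\}$, gives $N=n_1+n_2$ with $F(n_1),F(n_2)>\tfrac15\log N\gg\log N$, as required.

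I do not anticipate any genuine obstacle in carrying this out; the only delicate point is that the pigeonhole step needs the ``good'' set to occupy strictly more than half of $\{1,\dots,N-1\}$, which forces $\eps<1/4$ and is exactly why this method stalls at a fixed constant multiple of $\log N$. Pushing past the $\log N$ barrier cannot be done by any such density/pigeonhole reasoning --- for the structural reason explained in the introduction --- and is what the remainder of the paper is devoted to.
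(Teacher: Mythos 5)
Your proof is correct, but it is a genuinely different argument from the one in the paper. You use a union bound plus pigeonhole: the set of $n\le N$ with $F(n)\le K$ is covered by the intervals $[p-K,p+K]$ for primes $p\le N+K$, hence has size at most $(2K+1)\pi(N+K)\le(2\eps+o(1))N$ when $K=\eps\log N$; taking $\eps<1/4$ makes the ``good'' set $U_N=\{n:F(n)>\eps\log N\}$ have density $>1/2$, and since $V_N=N-U_N$ has the same density the intersection $U_N\cap V_N$ is nonempty, producing the desired $n_1$, $n_2=N-n_1$. This is short, elementary (PNT only), and as a bonus yields a positive-density set of admissible $n_1$. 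The paper instead proves the proposition by a miniature version of the sieve construction used for Theorem 1: one picks $x\approx\log(N/2)$ so that $P(x)\le N/2$, chooses a residue $b'$ modulo the primorial $P(z)$ with $z=x/2$ at random so that the two shifted sifted sets $(S_z-b')\cap[-y,y]$ and $(S_z-N+b')\cap[-y,y]$ (with $y=\lfloor 0.08x\rfloor$) jointly have at most $0.4x/\log x$ elements, and then uses the $\sim 0.5x/\log x$ primes in $(z,x]$ to knock out all survivors, after which $b$ and $N-b$ avoid primes in a window of length $\gg x\gg\log N$. The paper's route is deliberately heavier because it introduces exactly the two-set random-shift-plus-clean-up mechanism that the rest of the paper refines; your density/pigeonhole route is cleaner for the proposition itself but, as you correctly observe, has no leverage to push past a fixed constant times $\log N$, which is precisely why the paper forgoes it.
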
	

Our goal is to improve the lower bound from this proposition by obtaining a result where $\log N$ is multiplied by some growing function. For a number $\rho\in(0,1)$, we define
\begin{equation}\label{0.2}
C(\rho)=\sup\left\{\d>0: \frac{6\cdot10^{2\d}}{\log(1/(2\d))}<\rho\right\}.
\end{equation}

Our main result is the following.

\begin{theorem}\label{th1} 
Every sufficiently large positive integer $N$ can be represented as the sum \\ $N=n_1+n_2$, where
$$
F(n_i) \geq (\log N)(\log\log N)^{C(1/2)-o(1)},	
$$
for $i=1,2$. 
\end{theorem}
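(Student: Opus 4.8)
The plan is to construct $n_1$ and $n_2$ as specific residues modulo a large smooth modulus $Q = \prod_{p \le z} p$, chosen so that a whole interval around each $n_i$ is covered by congruence conditions forcing a small prime factor, hence no primes in those intervals. The obstacle flagged in the introduction is that the naive construction forces $n_1+n_2$ near a smooth number; the way around this is to decouple the residue classes of $n_1$ and $n_2$. So first I would fix the modulus $Q$ with $z \asymp C(1/2)\log\log N \cdot \log\log\log N$ or thereabouts (the precise calibration of $z$ against the target power of $\log\log N$ is what the function $C(\rho)$ in \eqref{0.2} is bookkeeping), and aim to find residues $a_1, a_2 \pmod Q$ with $a_1 + a_2 \equiv N \pmod Q$ such that $[\,a_i - H, a_i + H\,]$ contains no integer coprime to $Q$ that could be prime, where $H = (\log N)(\log\log N)^{C(1/2)-o(1)}$.

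The heart of the argument is a sieve/covering step carried out simultaneously for both $n_1$ and $n_2$. For each prime $p \le z$ I must choose a residue $b_p \pmod p$, and I want the set of integers in the $H$-neighbourhood of $a_1$ not divisible by any $p$ with the chosen $b_p$ — and likewise for $a_2$ — to be empty (or at least to contain no primes, which then follows by a separate short argument handling the few survivors via a larger-prime sieve à la Westzynthius–Rankin). Crucially, since $a_2 \equiv N - a_1 \pmod p$, the choice of $b_p$ for the $n_1$-side pins down the corresponding behaviour on the $n_2$-side: covering the point $n_1 \equiv t \pmod p$ is the same as covering $n_2 \equiv N - t \pmod p$, so one genuinely covers two arithmetic progressions at once with a single choice of $b_p$. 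I would set this up as a weighted counting problem — following the Erdős–Ford–Green–Konyagin–Tao random-residue framework, or the Maynard sieve-weight framework — where one shows the expected number of uncovered integers (on both sides together) is less than $1$, provided $z$ is large enough relative to $H/\log N$. Matching the efficiency of this double-covering against the length $H$ is exactly where the constant $6 \cdot 10^{2\delta}/\log(1/(2\delta))$ enters, and optimizing $\delta$ subject to this ratio being $< 1/2$ produces $C(1/2)$.

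Next I would handle the integers in $[a_i - H, a_i+H]$ that survive the small-prime covering: there are few of them (the count is controlled by Mertens/Brun–Titchmarsh-type estimates), and I would assign to each a distinct prime $q$ from a reservoir of medium-sized primes in $(z, z^{1+\varepsilon}]$, again respecting the constraint that a choice on the $n_1$-side simultaneously constrains the $n_2$-side; since the survivors on the two sides are matched by $t \mapsto N-t$, one handles them in pairs. Finally, by CRT, I lift the chosen residues $(b_p)_p$ and the medium-prime choices to a single residue $a_1 \pmod{Q'}$ with $Q' = Q \cdot \prod q$, set $a_2 \equiv N - a_1$, and then pick $n_1$ in the progression $a_1 \pmod{Q'}$ lying in, say, $[N/3, 2N/3]$ with $n_2 = N - n_1$ automatically in the companion progression; one checks $Q'$ is small enough ($Q' = \exp(O(z)) = (\log N)^{o(1)} \cdot$ nothing that beats $N$) that such an $n_1$ exists. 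The "$-o(1)$" in the exponent absorbs the slack between $z$ and the optimal covering length, the loss from the medium-prime stage, and the Brun–Titchmarsh constants.

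The main obstacle I anticipate is making the double-covering genuinely as efficient as the single-covering: a priori, demanding that one residue choice $b_p$ simultaneously kill progressions on both sides could cost a factor of $2$ in $z$ (equivalently, halve the attainable $H$), and the content of the theorem is that with the right random/sieve-weighted selection this loss does \emph{not} occur on the exponent scale — the two covering problems are positively correlated rather than independent, because the survivors come in the pairs $t \leftrightarrow N-t$. Quantifying this and showing the expected number of uncovered pairs is $o(1)$, uniformly in $N$, with the constant that yields precisely $C(1/2)$, is the crux; everything else (Mertens estimates, Brun–Titchmarsh for the survivor count, CRT assembly, the existence of $n_1$ in a short-ish progression) is standard.
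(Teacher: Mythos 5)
There is a genuine conceptual error at the crux of your proposal. You assert that ``the survivors come in the pairs $t \leftrightarrow N-t$'' and that the two covering problems are ``positively correlated,'' so that a single choice of $b_q$ per medium prime $q$ covers both progressions at no cost to the exponent. This is false. The two survivor sets are $(S_z - b)\cap[-y,y]$ and $(S_z - N + b)\cap[-y,y]$: a point $t$ lies in the first iff $t + b \not\equiv 0 \pmod p$ for every $p\leq z$, and in the second iff $t + b \not\equiv N \pmod p$ for every $p\leq z$ --- completely unrelated conditions, which would coincide only if every $p\leq z$ divided $N$, i.e.\ if $N$ were a primorial (precisely the degenerate case the introduction warns against). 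Consequently, choosing $b_q$ to hit many survivors on the $n_1$-side says essentially nothing about the $n_2$-side, and the ``factor-of-two loss'' you correctly flag as the danger really does occur; there is no correlation to rescue the argument. That loss is exactly why the exponent is $C(1/2)$ rather than the $C(1)$ one gets for the single Eratosthenes-sieved set. The paper accepts the loss and partitions the medium primes $q\in(z,x/2]$ into those $\equiv 1 \pmod 4$ (used exclusively to cover $S'$) and those $\equiv 3\pmod 4$ (used exclusively for $S''$), each of density $1/2$, running the hypergraph covering machinery of \cite{sieved} on each half separately. The supportedness density $\rho$ in that framework becomes $1/2$, and the supremum in \eqref{0.2} at $\rho=1/2$ is what produces $C(1/2)$.

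Your scales are also off by an exponential. The outer parameter is $x\approx\log N$, chosen so that $P(x)\leq N/2$; the prime-free interval has half-length $y\approx(\log N)(\log\log N)^{\delta}$; and the small-prime threshold $z$ below which $b\pmod p$ is chosen uniformly at random satisfies $z = y\log\log x/(\log x)^{1/2}$, which is only a factor $(\log\log N)^{1/2+o(1)}$ smaller than $y$ --- still of order $\log N$ up to $\log\log$-factors, not $\asymp\log\log N\cdot\log\log\log N$ as you propose. With $z$ at your scale, $[-y,y]$ would contain $\asymp y/\log z$ survivors, vastly more than the $\asymp z^{1+\eps}/\log z$ medium primes in a reservoir $(z,z^{1+\eps}]$, so the assignment of a distinct prime to each survivor would be impossible. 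The correct reservoir is $(z,x/2]$, with $(x/2,x]$ reserved for the final clean-up of the $O(x/\log x)$ elements that the greedy/covering stage fails to remove.
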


In fact, the proof of Theorem \ref{th1} implies that there are at least $\exp((\log N)^{1-o(1)})$ such representations (because of many ``good'' choices of $\b$ in Theorem \ref{th2}; see Section \ref{sec4}). Note that numerical calculations show that $C(1/2)>1/325565$. 

Theorem \ref{th1} admits the following interpretation. Recall that a set $A\subseteq \N$ is called a basis of order $k$, if every sufficiently large positive integer can be represented as the sum of $k$ summands from $A$. Now consider the set
$$
\Big\{n\in\N : F(n)\geq (\log n)(\log\log n)^\d \Big\}
$$
(informally, the set of ``prime avoiding numbers''). Theorem \ref{th1} then implies that this set is a basis of order $2$ for any $\d<C(1/2)$.


To prove Theorem \ref{th1}, we apply the technique from the recent paper \cite{sieved} of Ford, Konyagin, Maynard, Pomerance, and Tao, where the authors used a hypergraph covering lemma of Pippenger-Spencer type (which was introduced in \cite{Long}) to detect long gaps in general sieved sets. To state their result we need the following definition (the symbol $p$ always denotes a prime number). 

\begin{df}[Sieving System] A sieving system is a collection $\mathcal{I}$ of sets $I_p \subset \Z/p\Z$ of residue classes modulo $p$ for each prime $p$. Moreover, we have the following definitions.
\begin{itemize}
\item (Non-degeneracy) We say that the sieving system is non-degenerate if $|I_p|\leq p-1$ for all $p$.
\item ($B$-Boundedness) Given $B>0$, we say that the sieving system is $B$-bounded if $|I_p| \leq B$ for all primes $p$.
\item (One-dimensionality) We say that the sieving system is one-dimensional if we have the weighted Mertens-type product estimate
$$
\prod_{p\leq x}\left(1-\frac{|I_p|}{p}\right)\sim 
\frac{C_1}{\log x}, \quad (x\to\infty),
$$
for some constant $C_1>0$.

\item ($\rho$-supportedness) Given $\rho>0$, we say that the sieving system system is $\rho$-supported if the
density of primes with $|I_p| \geq 1$ equals $\rho$, that is,
$$
\lim_{x\to\infty}\frac{|{p\leq x: |I_p| \geq 1}|}{x/\log x}
= \rho.
$$
\end{itemize}
\end{df}

The main result of \cite{sieved} is that for such a sieving system, the sieved set 
$$
S_x=S_x(\mathcal{I})=\Z\setminus\bigcup_{p\leq x}I_p
$$
(the set of integers which do not belong to any $I_p$ for all $p\leq x$) contains a gap of size $x(\log x)^{C(\rho)-o(1)}$, where $C(\rho)$ is defined\footnote{Note that in the published journal version of \cite{sieved} there were some inaccuracies throughout the proof which led to the inappropriate definition (1.4) of $C(\rho)$. To fix the proof, one should use our definition (\ref{0.2}) of $C(\rho)$, and this is to appear (at least, on the ArXiV) in the Corrigendum of \cite{sieved}.} in (\ref{0.2}) and the rate of decay in $o(1)$ depends on $\mathcal{I}$. Despite the fact that this general bound applied to the Eratosthenes sieve (that is, the sieving system with $I_p=\{0\}$ for all $p$) yields only a bound
$$
G(X) \gg (\log X)(\log\log X)^{C(1)-o(1)} \gg (\log X)(\log\log X)^{1/835},
$$
which is weaker than (\ref{0.1}), it has the advantage of not dealing with ``smooth'' numbers from the above discussion, and this is crucial for us.

To deduce Theorem \ref{th1}, we also work with the one-dimensional Eratosthenes sieving system; however, we need to treat not one but two sets $S_x-n_1$ and $S_x-N+n_1$ (for some $n_1$) simultaneously; our main goal will be to guarantee the inequality (\ref{2.2}) from Section \ref{sec2}. To do so, we use disjoint sets of (``large'') primes of density $1/2$ (and this is why our result contains the exponent $C(1/2)$) to handle those two sets separately. Fortunately for us, the one-dimensionality is in fact needed only for ``small'' primes, and we are able to make use of it before partitioning the large primes. Except this issue of dealing with two sets simultaneously, our proof actually almost repeats the proof of the main result from \cite{sieved}. However, due to the new technical issues, there are not so many things from \cite{sieved} we can use without any changes, and so we decided to provide the full proof in spite of huge intersection with the text of \cite{sieved}.  

The paper is organized as follows. In Section \ref{sec2} we prove Proposition \ref{prop1} (it is relatively short and based on the ideas we need for our main result); at the same time, we reduce Theorem \ref{th1} to the problem of sieving out two shifts of $S_{x/2}$. In Section \ref{sec3} we give an outline of the next part of the proof and introduce the needed notation. Arguments from Sections \ref{sec4}, \ref{sec5}, \ref{sec6} are analogs of those from Sections 3,4,5 of \cite{sieved}, and our Theorems \ref{th2} and \ref{th3} are modifications of Theorems 2 and 3 of \cite{sieved}. 

\medskip 

\textbf{Acknowledgements.} The author would like to thank Sergei Konyagin for introducing him to this question.

\section{Preliminaries and proof of Proposition \ref{prop1}}\label{sec2}

In this section we provide a proof of Proposition \ref{prop1} to illustrate some parts of the general strategy in a more simple context and also make the first reduction of Theorem \ref{th1}. This proof is actually similar to that of (1.8) of \cite{sieved} (which shows the existence of a gap of length $\gg x$ in $S_x$) with that difference that we again have to handle two sets instead of one. 

\begin{proof}[Proof of Proposition \ref{prop1}]
For a number $x\geq 2$, let 
$$
S_x=\{n\in \Z: n\notequiv0 \pmod{p} \mbox{ for each } p\leq x\}.
$$	
Clearly, $S_x$ is a periodic set with the period $P(x)=\prod_{p\leq x}p$. Let $z=x/2$ and let a number $b'\in \Z/P(z)\Z$ be chosen uniformly at random. We consider the random sets 
$$
A_{b'}:=(S_z-b')\cap[-y,y]
$$
and
$$
A_{N-b'}:=(S_z-N+b')\cap[-y,y],
$$
where $y=\lfloor 0.08x \rfloor$. We have
\begin{multline*}
\E|A_{b'}|=\E\sum_{|n|\leq y}1_{n\in S_z-b'}=\sum_{|n|\leq y}\prod_{p\leq z}\P(b'\notequiv -n\pmod{p})\\
=\sum_{|n|\leq y}\prod_{p\leq z}\left(1-1/p\right)=\frac{(2+o(1))y}{\log z}, 	
\end{multline*}
and similarly
$$
\E|A_{N-b'}|=\frac{(2+o(1))y}{\log z}.
$$
Therefore, if $x$ is large enough, 
$$
\E(|A_{b'}|+|A_{N-b'}|) \leq \frac{5y}{\log x}.
$$
Thus, there is a choice $b'$ modulo $P(z)$ such that 
$$
|A_{b'}|+|A_{N-b'}|\leq \frac{0.4x}{\log x}.
$$ 

Let $P_{z,x}=\prod_{z<p\leq x}p$ and 
$$
S_{z,x}=\{n\in\Z: n\notequiv 0 \pmod p\quad \forall p\in(z,x] \, \}.
$$ 
Now we choose a number $b$ modulo $P(x)$. We set $b\equiv b'\pmod{P(z)}$ and claim that there is a choice $b \pmod{P_{z,x}}$ (let us denote it $b''$) such that
\begin{equation}\label{1.1}
(S_x-b)\cap[-y,y]=(S_x-N+b)\cap [-y,y] = \varnothing. 
\end{equation}
To see that this is possible, note that
$$
S_x-b=\{n\in \Z: n\notequiv -b \pmod p \quad \forall p\leq x\} = (S_z-b')\cap (S_{z,x}-b'');
$$
further, for each element $m\in A_{b'}$ we take a prime $q\in(z,x]$ and define $b\equiv b_q\pmod q$ such that $m\equiv -b_q \pmod q$; so, $m\notin S_{z,x}-b''$ and thus $m\notin S_x-b$.  We do similarly for each $m\in A_{N-b'}$. Since there are $(0.5+o(1))x/\log x$ primes in $(z,x]$ and at most $0.4x/\log x$ survived numbers $m\in A_{b'}\cup A_{N-b'}$, it is possible to make this ``clean-up'' stage. 

To finish the proof, we define $f(n)=\min\{|n-l|: l\in S_x\}$. The equality (\ref{1.1}) then means that  
$$
f(b)\geq y, \quad f(N-b)\geq y
$$
for our choice of $b\pmod{P(x)}$. Now we choose $x\approx \log (N/2)$ maximally so that $P(x)\leq N/2$.
We thus see that it is possible to take that $b$ with $b\in [N/4, 3N/4]$; then $N-b\in [N/4,3N/4]$ as well. Finally, since $\{N^{1/2}<p\leq N\} \subset S_x$, we get
$$
F(b)\geq f(b) \gg x \gg \log N,
$$
and similarly $F(N-b)\gg \log N$. This completes the proof of Proposition \ref{prop1}.
\end{proof}

\medskip 

Now we see that to prove Theorem \ref{th1}, it is enough to show that for any fixed $\d<C(1/2)$ and $y=\lceil x(\log x)^{\d} \rceil$ there exists a choice of $b$ modulo $P(x/2)$ such that
\begin{equation}\label{1.2}
\Big|\Big((S_{x/2}-b)\cup (S_{x/2}-N+b)\Big)\cap [-y,y]\Big| \leq \left(\frac12-\eps\right)\frac{x}{\log x}	
\end{equation}
for some $\eps>0$. Then arguing according to the clean up stage in the above proof, one can easily obtain that both $F(b)$ and $F(N-b)$ are $\gg (\log N)(\log\log N)^{\d}$, and Theorem \ref{th1} will follow. Note that the condition $\d<C(1/2)$ is equivalent to (recall the definition (\ref{0.2}) of $C(\rho)$)
\begin{equation}\label{1.3}
\frac{6\cdot10^{2\d}}{\log(1/(2\d))}<\frac12;
\end{equation}
we will use it in this form in Section \ref{sec5}.

\section{Notation and outline}\label{sec3}

Throughout the proof, we will use positive parameters $K$, $\xi$, $M$ which we describe below; one may think of them as being fixed for most of the time (in fact, it is only the end of Section \ref{sec5} where the exact choice of them is important). The implied constant in $\ll$ and related order estimates may depend on these parameters.  We will rely on probabilistic methods; boldface symbols such as $\S'$, $\bl$, $\n$, etc. will denote the random variables (sets, functions, numbers, etc.), and the corresponding non-boldface symbols $S'$, $\l$, $n$ will denote deterministic counterparts of these variables.

For a fixed $\d>0$ with (\ref{1.3}), we define
\begin{equation}\label{2.1}
y=\lceil x(\log x)^{\d} \rceil
\end{equation}
and
\begin{equation}\label{2.2}
z=\frac{y\log\log x}{(\log x)^{1/2}}.
\end{equation}
Let $\xi>1$ be a real number (which we will finally choose to be close to $1$), and
$$
\HH=\left\{ H\in \{1,\xi, \xi^2,...\}: \frac{2y}{x} \leq H\leq \frac{y}{\xi z}\right\}
$$
so that each $H$ obeys
\begin{equation}\label{2.3}
2(\log x)^{\d}\leq H\leq \frac{y}{z} = \frac{(\log x)^{1/2}}{\log\log x}.
\end{equation}
For each $H$ and $i\in\{1,3\}$, let $\Q_{H,i}$ be the set of primes $i\pmod{4}$ in $(y/(\xi H),y/H]$. Note that
\begin{equation}\label{2.4}
|\Q_{H,i}|\sim (1-1/\xi)\frac{y}{2H\log x}
\end{equation}
whenever $x$ is large enough in terms of $\xi$. Let
$$
\Q=\bigcup_{H\in\HH}\left(\Q_{H,1}\cup\Q_{H,3}\right),
$$
and for each $q\in \Q$, let $H_q$ be the unique $H$ such that $q\in \Q_{H,1}\cup\Q_{H,3}$, which is equivalent to
$$
\frac{y}{\xi H}<q\leq \frac{y}{H}.
$$
Let also $M$ be a number with
\begin{equation}\label{2.5}
6<M\leq 7.	
\end{equation}

As in the previous section, we will use the notation
$$
S_z=\{n\in\Z: n\notequiv 0 \pmod{p} \mbox{ for all } p\leq z\}
$$
and
$$
S_{z,u}=\{n\in\Z: n\notequiv 0 \pmod{p} \mbox{ for all } z<p\leq u\}.
$$
We adopt the abbreviations
\begin{equation}\label{2.6}
P=P(z)=\prod_{p\leq z}p, \quad \s=\s(z)=\prod_{p\leq z}\left(1-\frac1p\right), \quad \S'=S_z-\b, \quad \S''=S_z-N+\b,  
\end{equation}
where $\b$ is a residue class chosen uniformly at random from $\Z/P(z)\Z$; so, both $\S'$ and $\S''$ are random shifts of $S_z$. For a fixed $H\in \HH$, we also define
\begin{equation}\label{2.7}
P_1=\prod_{p\leq H^M}p, \quad \s_1=\s(H^M), \quad \b_1\equiv \b \pmod {P_1}, \quad \S'_1=S_{H^M}-\b_1, \quad \S''_1=S_{H^M,z}-N+\b_1,  
\end{equation}
and
\begin{equation}\label{2.8}
P_2=\prod_{H^M<p\leq z}p, \quad \s_2=\s(H^M,z), \quad \b_2\equiv \b \pmod {P_2}, \quad \S'_2=S_{H^M}-\b_2, \quad \S''_2=S_{H^M,z}-N+\b_2,  
\end{equation}
where $\s(H^M,z)=\prod_{H^M<p\leq z}\left(1-\frac1p\right)$. Obviously, for each $H\in\HH$,
\begin{equation}\label{2.85}
P=P_1P_2, \quad \s=\s_1\s_2, \quad \S'=\S'_1\cap \S'_2, \quad \S''=\S''_1\cap \S''_2.
\end{equation}
Note that all the quantities defined in (\ref{2.7}) and (\ref{2.8}) depend on $H$ and $M$; however, we will not indicate this dependence for the brevity (the values of $H$ and $M$ will always be clear from the context). 

Finally, we define
\begin{equation}\label{2.9}
\bAP'(KH_q; q,n)=\{n+qh: 1\leq h\leq KH_q\}\cap \S_1',
\end{equation}
\begin{equation}\label{2.10}
\bAP''(KH_q; q,n)=\{n+qh: 1\leq h\leq KH_q\}\cap \S_1'',
\end{equation}
where $K$ is a positive integer which will be chosen large enough, and, for $q\equiv1\pmod4$,
\begin{equation}\label{2.11}
\bl(H_q; q,n)=
\begin{cases}
\s_2^{-|\bAP'(KH_q,q,n)|}, &\mbox{if } \bAP'(KH_q,q,n) \subset \S'_2;\\ 
0, &\mbox{otherwise}.		
\end{cases}
\end{equation}
and, for $q\equiv3\pmod4$,
\begin{equation}\label{2.12}
\bl(H_q; q,n)=
\begin{cases}
\s_2^{-|\bAP''(KH_q,q,n)|}, &\mbox{if } \bAP''(KH_q,q,n) \subset \S''_2;\\ 
0, &\mbox{otherwise}.		
\end{cases}
\end{equation}
So, for each $q\in\Q$, the weights $\bl(H_q;q,n)$ are random functions which depend on $\b$. 

\bigskip 

Now we give a brief outline of the proof. As in \cite{sieved}, there are three main steps:

\begin{enumerate}
\item (Uniform random stage) We choose $\b$ modulo $P(z)$ uniformly at random; this is equivalent to choosing $b \pmod p$ randomly with uniform probability, independently for each $p\leq z$. Then, first of all, we can easily guarantee that both sets $\S'\cap[-y,y]$ and $\S''\cap[-y,y]$ have size about $2y/\log z$ (see Remark \ref{rem4.1} below). We also show that with high probability the sets $\S_1',\S_2',\S_1'',\S_2''$ behave as we need them to for all scales $H\in\HH$. 

\item (Greedy stage) Having chosen an appropriate $b\pmod{P(z)}$, we continue sieving out the sets $S'\cap[-y,y]$ and $S''\cap[-y,y]$. They have small intersection, so we need to work with both of them separately using disjoint subsets of ``large'' (those between $z$ and $x/2$) primes $\{q\in\Q: q\equiv1\pmod4\}$ and $\{q\in\Q: q\equiv3\pmod4\}$ of density $1/2$, respectively. To establish (\ref{1.2}), we select $b$ modulo $P(z,x/2)$ randomly, but dependent on the choice of $b$ modulo $P(z)$. Slightly more precisely, for each prime $q\in(z,x/2]$ with $q\equiv1\pmod4$, we will select $b\equiv b_q \pmod q$ so that $\{b_q+kq: k\in\Z\}$ knocks out nearly as many
elements of $(S_z-b) \cap [-y,y]$ as possible; we do the same with primes $q\in(z,x/2]$, $q\equiv3\pmod4$, to sieve out almost all of $(S_z-N+b)\cap[-y,y]$. This can be done using the so-called hypergraph covering theorem (Lemma \ref{lem3.2} below).

\item (Clean up stage) Finally, as we saw in the previous section, one can use the remaining primes in $(x/2,x]$ to ``kill'' all the numbers from both $S'\cap[-y,y]$ and $S''\cap[-y,y]$ survived after the greedy stage, and this actually completes the proof. 
\end{enumerate}
We refer the interested reader to a more detailed discussion of the method to \cite{sieved}.

\smallskip 

In the next section we reduce the inequality (\ref{1.2}) to Theorem \ref{th2}, which, in turn, is reduced to Theorem \ref{th3} in Section \ref{sec5}. The final section is devoted to the proof of Theorem \ref{th3}. 

\section{Greedy sieving using Hypergraph covering}\label{sec4}

Recall that $\S'$ and $\S''$ are the random sets $(S_z-\b)$ and $(S_z-N+\b)$, respectively, where $\b$ is chosen uniformly at random from $\Z/P\Z$. As was mentioned above, by $S'$ and $S''$ we denote their realizations (with respect to some choice of $b$); the same is applied to the random weights $\bl$.

\begin{theorem}\label{th2}
Fix $\d$ satisfying (\ref{1.3}), and suppose that $M-6$, $1/K$, and $\xi-1$ are sufficiently small depending on $\d$, and that $x$ is sufficiently large depending on $\d,M,K,\xi$. Then for any positive $\eps<(M-6)/6$ there exist $b \pmod{P(z)}$ and the sets $\Q'\subseteq\{q\in \Q: q\equiv1\pmod4\}$ and $\Q''\subseteq\{q\in \Q: q\equiv3\pmod4\}$ such that

\medskip

$(i)$ one has 
\begin{equation}\label{3.1}
\big|(S'\cup S'')\cap[-y,y]\big| \leq 9\s y;	
\end{equation} 

$(ii)$ for any $q\in \Q'\cup \Q''$ one has
\begin{equation}\label{3.2}
\sum_{-(K+1)y<n\leq y}\l(H_q;q,n)=\left(1+O\left(\frac{1}{(\log x)^{\d(1+\eps)}}\right)\right)(K+2)y;
\end{equation} 

$(iii)$ for all but at most $\frac{x}{10\log x}$ elements $n$ of $S'\cap[-y,y]$ one has
\begin{equation}\label{3.3}
\sum_{q\in \Q'}\sum_{h\leq KH_q}\l(H_q;q,n-qh)= \left(C_2'+O\left(\frac{1}{(\log x)^{\d(1+\eps)}}\right)\right)(K+2)y,
\end{equation}
and for all but at most $\frac{x}{10\log x}$ elements $n$ of $S''\cap[-y,y]$ one has
\begin{equation}\label{3.4}
	\sum_{q\in \Q''}\sum_{h\leq KH_q}\l(H_q;q,n-qh)= \left(C_2''+O\left(\frac{1}{(\log x)^{\d(1+\eps)}}\right)\right)(K+2)y,
\end{equation}
where $C_2'$ and $C_2''$ are some quantities independent of $n$ with
\begin{equation}\label{3.5}
	10^{2\d} \leq C_2',C_2'' \leq 100.
\end{equation}
 	
\end{theorem}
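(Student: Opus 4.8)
The plan is to prove Theorem \ref{th2} by a two-stage probabilistic argument that closely follows the proof of Theorem 2 in \cite{sieved}, but carries two shifted copies of $S_z$ through simultaneously. First I would address the uniform random stage. Choosing $\b$ uniformly in $\Z/P\Z$ is equivalent to choosing the residues $b \bmod p$ independently and uniformly for each $p \leq z$. The first-moment computation already done in Section \ref{sec2} gives $\E|\S' \cap [-y,y]| = \E|\S'' \cap [-y,y]| = (2+o(1))\s y$, so by Markov's inequality the event that $|(S' \cup S'') \cap [-y,y]| \leq 9\s y$ holds with probability, say, at least $1 - 4/9 - o(1) > 1/2$; this will give part $(i)$. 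The more substantial part of this stage is to show that, with probability $1-o(1)$, the four random sets $\S_1', \S_2', \S_1'', \S_2''$ simultaneously ``behave generically'' at every scale $H \in \HH$ — concretely, that the weighted sums defining the quantities in $(ii)$ and the analogous second-moment / variance estimates needed for $(iii)$ take their expected values up to the stated error $O((\log x)^{-\d(1+\eps)})$. Since $|\HH| \ll \log\log x$ and the number of relevant linear events per scale is polynomial in $x$, a union bound over all of them works provided each individual deviation probability is super-polynomially small; this is exactly where one needs $z$ to be as large as \eqref{2.2} makes it (so that $S_z$ is sufficiently sieved) and where one invokes sieve estimates (fundamental lemma of the sieve, or Mertens-type asymptotics) to control $\E \bl(H_q;q,n)$ and its higher moments. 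I would set up each of these as a ``bad event'' $E_{H}$ depending only on $\b \bmod P$, estimate $\P(E_H)$ via a moment bound, and conclude $\P(\bigcup_H E_H) = o(1)$; hence there exists a deterministic $b \bmod P(z)$ avoiding all bad events and satisfying $(i)$, and we fix it.

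Next, given such a $b$, I would perform the greedy (hypergraph covering) stage to produce $\Q'$ and $\Q''$. The key point is that the primes $q \equiv 1 \pmod 4$ in $\Q$ are used to sieve $S' \cap [-y,y]$ and the primes $q \equiv 3 \pmod 4$ are used to sieve $S'' \cap [-y,y]$; these two families are disjoint, so the two sieving processes do not interfere and can literally be run in parallel. For each of the two congruence classes separately, the analysis is identical to \cite{sieved}: one forms the weighted hypergraph whose vertices are the surviving elements of $S' \cap [-y,y]$ (resp.\ $S'' \cap [-y,y]$) and whose edges, indexed by pairs $(q,h)$ with $q$ in the appropriate class and $h \leq KH_q$, are the sets $\bAP'(KH_q;q,n)$ (resp.\ $\bAP''$), each carrying weight $\bl(H_q;q,n)$. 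The estimates $(ii)$ and $(iii)$ are precisely the hypotheses of the Pippenger–Spencer-type covering lemma (Lemma \ref{lem3.2}, quoted from \cite{Long}/\cite{sieved}): $(ii)$ says the total edge weight emanating from each prime $q$ is the expected $(1+o(1))(K+2)y$, and $(iii)$ says that each vertex (outside a small exceptional set of size $\leq x/(10\log x)$) is covered with the expected multiplicity $(C_2' + o(1))(K+2)y$ (resp.\ $C_2''$). Establishing $(ii)$ is a direct first-moment computation: $\sum_{-(K+1)y < n \leq y} \l(H_q;q,n)$ counts, with the compensating weight $\s_2^{-|\bAP'|}$, the arithmetic progressions $\{n+qh : 1 \leq h \leq KH_q\}$ that land inside $\S_1'$ and $\S_2'$, and the weight is designed exactly so that the expectation telescopes to $(K+2)y$; one then uses the genericity of $b$ (the fixed realization from stage one) to replace expectation by actual value up to $O((\log x)^{-\d(1+\eps)})$. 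Establishing $(iii)$ is the analogous computation from the ``dual'' side, summing over $q$ and $h$ with $n$ fixed; here the constant $C_2'$ emerges as a product of local densities over the primes $q$, and one must check the two-sided bound \eqref{3.5}, with the lower bound $10^{2\d} \leq C_2'$ coming from the length $|\HH| \asymp$ (range of $H$) and the normalization, and the upper bound $C_2', C_2'' \leq 100$ from crude estimates on the same product.

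I expect the main obstacle to be exactly the bookkeeping that makes the two-set version genuinely different from \cite{sieved}, rather than any single deep estimate. Specifically: (a) one must verify that splitting the large primes by their residue mod $4$ still leaves enough primes of each type — by \eqref{2.4} each $\Q_{H,i}$ has $\sim (1-1/\xi)\,y/(2H\log x)$ elements, i.e.\ a factor $1/2$ less than the full count, and this is the source of the $C(1/2)$ (rather than $C(1)$) in the final theorem; one has to track this factor of $2$ carefully through the normalization so that the hypothesis $(iii)$ of the covering lemma is still met with a usable constant $C_2', C_2''$, and this is where the arithmetic condition \eqref{1.3}, $6\cdot 10^{2\d}/\log(1/(2\d)) < 1/2$, gets used (in Section \ref{sec5}, when $\Q'$, $\Q''$ are fed into the covering lemma). (b) The one-dimensionality of the sieving system is only available for the small primes $p \leq z$, so all the Mertens-type asymptotics with the right constant must be extracted before the mod-$4$ partition; after the partition one only has crude bounds on products over large primes, and one must make sure that suffices for \eqref{3.2}–\eqref{3.5}. (c) The exceptional sets in $(iii)$ must be kept below $x/(10\log x)$ for \emph{both} $S'$ and $S''$ simultaneously, which is fine since they are controlled separately, but the constants ($9\s y$ in $(i)$, the two $x/(10\log x)$ bounds, the eventual $(1/2-\eps)x/\log x$ in \eqref{1.2}) have to be chosen compatibly — this forces the restriction $\eps < (M-6)/6$ and the smallness conditions on $M-6$, $1/K$, $\xi-1$. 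Once these are pinned down, the proof is, as the authors note, essentially a transcription of \cite{sieved} with the index set of large primes replaced by its mod-$4$ halves, and I would organize the writeup to make that parallel explicit, proving everything for $\Q' \subseteq \{q \equiv 1\}$ and remarking that the argument for $\Q'' \subseteq \{q \equiv 3\}$ is verbatim the same.
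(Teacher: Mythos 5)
Your first paragraph captures the actual mechanism of the proof: part $(i)$ follows from the first-moment computation and Markov exactly as you describe, and the bulk of the work is to show that, with probability $1-o(1)$, the sums appearing in (\ref{3.2})--(\ref{3.4}) are simultaneously close to their expectations across all scales $H\in\HH$, so that one can fix a single $\b\pmod{P(z)}$ realizing $(i)$--$(iii)$. In the paper this is organized as a separate moment theorem (Theorem~\ref{th3}) giving the first and second moments of those sums over $\b$; Theorem~\ref{th2} is then deduced by Chebyshev plus a union bound over $H$, and the constraint $\eps<(M-6)/6$ enters precisely so that the deviation probabilities $O(H^{-(M-6-5\eps)})$ sum to $o(1)$ over $\HH$. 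Your list of obstacles (the factor $1/2$ from the mod-$4$ split, the fact that one-dimensionality is used only for $p\le z$, the role of $\eps<(M-6)/6$) is also accurate.

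Your second paragraph, however, misplaces the hypergraph covering lemma and in doing so introduces a circularity. The covering lemma is not used in the proof of Theorem~\ref{th2} and does not ``produce'' $\Q'$ and $\Q''$. Once a good $\b$ is fixed, $\Q'$ is simply \emph{defined} as $\bigcup_{H\in\HH}\Q'_H$, where $\Q'_H$ is the set of $q\in\Q_{H,1}$ for which the sum in (\ref{3.2}) lies within $y/H^{1+\eps}$ of $(K+2)y$; the second-moment bound from (\ref{4.2}) gives $\E|\Q_{H,1}\setminus\bQ'_H|\ll|\Q_{H,1}|/H^{M-4-2\eps}$, so for the fixed $\b$ almost all $q\in\Q_{H,1}$ lie in $\Q'$ (and likewise for $\Q''$), with no hypergraph argument involved. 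The covering lemma only enters later, in Section~\ref{sec4}, where it takes (\ref{3.2})--(\ref{3.4}) as its hypotheses and produces the residues $n_q$. As written, your sketch proposes to use the covering lemma --- whose hypotheses are exactly $(ii)$ and $(iii)$ --- as a step in establishing $(ii)$ and $(iii)$. You recover the correct mechanism a few sentences later (``Establishing $(ii)$ is a direct first-moment computation\dots\ one then uses the genericity of $b$''), so the fix is to excise the covering-lemma detour entirely and carry the Chebyshev/union-bound argument through to the explicit evaluation of $C_2'$ as $\frac{1}{(K+2)y}\sum_{H\in\HH}\frac{|\Q_{H,1}|\lfloor KH\rfloor}{\s_2}$, whose asymptotic $\sim\frac{K(1-1/\xi)}{2M(K+2)\log\xi}\log(1/(2\d))$ is where (\ref{1.3}) finally delivers the bounds $10^{2\d}\le C_2'\le100$ in (\ref{3.5}).
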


\smallskip 

Theorem \ref{th2} can be considered as a preparation for the ``greedy'' stage of sieving out the sets $S'$ and $S''$ using large primes from $(z,x/2)$. After fixing an appropriate $b\pmod{P(z)}$ and getting disjoint sets $\Q'$ and $\Q''$ for working with $S'$ and $S''$, respectively, we will be in position to apply the following lemma (which is Lemma 3.1 of \cite{sieved}) to deduce Theorem \ref{th1} from Theorem \ref{th2}.

\begin{lem}[Hypergraph covering lemma]\label{lem3.2} 
Suppose that $0<\d\leq1/2$ and $K\geq1$, and let $y\geq y_0(\d,K)$ with $y_0(\d,K)$ sufficiently large, and let $V$ be a finite set with $|V|\leq y$. Let $1\leq s\leq y$, and suppose that $\e_1,...,\e_s$ are random subsets of $V$ satisfying the following:
\begin{equation}\label{3.6}
|\e_i|\leq \frac{K(\log y)^{1/2}}{\log\log y} \quad (1\leq i\leq s),
\end{equation}
\begin{equation}\label{3.7}
\P(v\in \e_i) \leq y^{-1/2-1/100} \quad (v\in V, 1\leq i\leq s),	
\end{equation}
\begin{equation}\label{3.8}
\sum_{i=1}^s\P(v,v'\in \e_i) \leq y^{-1/2} \quad (v,v'\in V, v\neq v'),	
\end{equation}
\begin{equation}\label{3.9}
\left|\sum_{i=1}^s\P(v\in \e_i)-C_2\right| \leq \eta \quad (v\in V),	
\end{equation}
where $C_2$ and $\eta$ satisfy
\begin{equation}\label{3.10}
10^{2\d} \leq C_2 \leq 100, \quad \eta \geq \frac{1}{(\log y)^{\d}\log\log y} .	
\end{equation}
Then there are subsets $e_i$ of $V$, $1\leq i\leq s$, with $e_i$ being in the support of $\e_i$ for every $i$, and such that
\begin{equation}\label{3.11}
	\left|V\setminus \bigcup_{i=1}^se_i\right| \leq C_3\eta|V|,
\end{equation}
where $C_3$ is an absolute constant.
\end{lem}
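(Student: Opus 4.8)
The plan is to prove Lemma \ref{lem3.2} by the semi-random (R\"{o}dl nibble) method, in the Pippenger--Spencer form introduced in \cite{Long} and reused in \cite{sieved}. Conditions (\ref{3.6})--(\ref{3.10}) say exactly that a random realization of the edges $\e_i$ sees, at every vertex, a ``degree'' close to the constant $C_2$; that edges are short, $|\e_i|\le D$ with $D=K(\log y)^{1/2}/\log\log y=y^{o(1)}$; that single vertices are hit with probability $\le y^{-1/2-1/100}$; and that codegrees are negligible, $\sum_i\P(v,v'\in\e_i)\le y^{-1/2}$ while $D=y^{o(1)}$. Thus the hypergraph of possible realizations is quasi-random, and the bound $C_2\ge 10^{2\d}>1$ says that the total edge mass $\sum_i\E|\e_i|=(1+o(1))C_2|V|$ exceeds $|V|$, which is the budget an almost-perfect matching needs. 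By the probabilistic method it suffices to produce a random choice of realizations $\e_i^*$, each supported on $\e_i$, with $\E\big|V\setminus\bigcup_i\e_i^*\big|\le c_0\eta|V|$ for an absolute constant $c_0$; one then takes $C_3=c_0$. What makes the target $c_0\eta|V|$ attainable, rather than the naive $e^{-C_2}|V|$ one gets by using the $\e_i$ as they fall, is that we may \emph{choose} the realizations, keeping the committed edges essentially pairwise disjoint and inside the part of $V$ not yet covered.

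Concretely I would run the nibble in $T$ rounds, $T$ of order $\log(1/\eta)$, with a small absolute nibble parameter $\g$. Enter round $j$ with the still-uncovered set $V_{j-1}$ of size $\beta_{j-1}|V|$, where $\beta_0=1$ and $\beta_j=(1-\g+o(1))\beta_{j-1}$, and assign to round $j$ a fresh batch $B_j$ of edges carrying total vertex-degree $\approx\g\beta_{j-1}$; the total degree demanded, $\sum_j\g\beta_{j-1}<1$, lies within the available budget $C_2-O(\eta)$. In round $j$ perform a random greedy matching on $B_j$: expose the edges in uniformly random order, reveal each $\e_i$, and commit its realization precisely when it lies inside $V_{j-1}$ and meets nothing committed so far in this round; an edge committed in no round is assigned, at the end, an arbitrary realization from its support, which can only enlarge the final cover. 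Since $\beta_{j-1}\ge\eta\ge((\log y)^{\d}\log\log y)^{-1}=y^{-o(1)}$ and $|\e_i|\le D=y^{o(1)}$, a random realization of $\e_i$ lies inside $V_{j-1}$ with probability $y^{-o(1)}$, so almost all of the batch is usable, and restricting the one-point and codegree bounds to $V_{j-1}$ costs only the harmless factor $\beta_{j-1}^{-D}=y^{o(1)}$, so two committed edges essentially never clash. Hence round $j$ covers $(1+o(1))\g\beta_{j-1}|V|$ fresh vertices, $\beta_T\ll\eta$ after the $T$ rounds, and one checks by induction that the residual system entering each round still obeys versions of (\ref{3.6})--(\ref{3.9}) with the one-point and codegree bounds worsened only by $y^{o(1)}$, hence still well below the thresholds.

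The step I expect to be the main obstacle is the concentration analysis making each round behave as just described: one must show that the random greedy matching of round $j$ really covers $(1+o(1))\g\beta_{j-1}|V|$ vertices and that the residual degree at each surviving vertex stays within $1+o(1)$ of its mean, with failure probability small enough to survive a union bound over the $\le y$ vertices and the $O(\log(1/\eta))$ rounds. This is precisely where the exact shapes of the hypotheses are consumed: $|\e_i|\le K(\log y)^{1/2}/\log\log y$ together with $\P(v\in\e_i)\le y^{-1/2-1/100}$ keeps the increments of the exposure martingale tiny, so that Azuma's inequality (or the refined concentration estimates of \cite{Long}) applies; $\sum_i\P(v,v'\in\e_i)\le y^{-1/2}$ pushes the ``collision'' corrections below the $\eta$ scale; and $\eta\ge((\log y)^{\d}\log\log y)^{-1}$ delimits the regime in which the hypergraph is random enough for all these errors to stay below a small multiple of $\eta$, which is what pins the final leftover at $O(\eta|V|)$. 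In the paper itself, since Lemma \ref{lem3.2} is quoted verbatim as Lemma 3.1 of \cite{sieved} (itself extracted from \cite{Long}), one just invokes it; the outline above is how I would reconstruct its proof.
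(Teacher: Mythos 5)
The paper does not prove Lemma~\ref{lem3.2}; it is invoked verbatim as Lemma~3.1 of \cite{sieved} (which in turn descends from the hypergraph covering machinery of \cite{Long}), and you correctly flag this at the end of your write-up. So the ``paper's proof'' here is simply a citation, and your outline is a reconstruction rather than a competing argument.

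As a reconstruction, your outline correctly names the method (semi-random/R\"odl-nibble in the Pippenger--Spencer form, implemented in rounds with a small nibble parameter $\gamma$, run for $T\asymp\log(1/\eta)$ rounds, with concentration via exposure martingales, and $C_2\ge 10^{2\d}>1$ supplying the degree budget). But there is a real gap in the heart of the sketch. You require a committed realization to lie entirely inside the surviving set $V_{j-1}$, estimate that probability from below by $\beta_{j-1}^{D}\ge\eta^{D}=y^{-o(1)}$, and then conclude ``almost all of the batch is usable.'' That last inference is wrong: $y^{-o(1)}$ is a lower bound that merely rules out power savings; here $\eta^{D}$ is on the order of $\exp\bigl(-cK\d(\log y)^{1/2}\bigr)$, which still tends to $0$. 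With that acceptance probability the round would cover roughly $\gamma\beta_{j-1}^{D+1}|V|$ vertices, far short of the claimed $\gamma\beta_{j-1}|V|$, and the budget $\sum_j\gamma\beta_{j-1}\le C_2$ leaves no room to inflate the batch to compensate. The actual proof of Lemma~3.1 in \cite{sieved}/\cite{Long} does not demand that a random realization land wholly inside $V_{j-1}$; it exploits that the realization is \emph{chosen}, not drawn, and uses a more delicate bookkeeping (together with the one-point bound (\ref{3.7}), the co-degree bound (\ref{3.8}), and the size bound (\ref{3.6})) to make each nibble step essentially lossless. You correctly identify the concentration analysis as ``the main obstacle,'' but the specific mechanism you describe for stepping around it does not, as written, close.
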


\smallskip 

\begin{proof}[Deduction of Theorem \ref{th1} from Theorem \ref{th2}] Let $b$, $\Q'$, and $\Q''$ be from Theorem \ref{th2}. We will apply Hypergraph covering Lemma \ref{lem3.2}. Let
$$
V'=\{n\in S'\cap[-y,y]: (\ref{3.3}) \mbox{ holds} \}
$$
and
$$
V''=\{n\in S''\cap[-y,y]: (\ref{3.4}) \mbox{ holds} \}.
$$
For each $q\in \Q'\cup \Q''$, we define the random integer $\n_q$ by setting
\begin{equation}\label{3.12}
\P(\n_q=n)=\frac{\l(H_q;q,n)}{\sum_{-(K+1)y<m\leq y}\l(H_q; q,m)}
\end{equation}	
Note that by (\ref{3.2}) the denominator is non-zero, so it is well-defined probability distribution.	
Now, for $q\in \Q'$, let 
$$	
\e'_q:=V' \cap \{\n_q+hq: 1\leq h\leq KH_q\},	
$$	
and, for $q\in \Q''$, let 
$$	
\e''_q:=V'' \cap \{\n_q+hq: 1\leq h\leq KH_q\}.	
$$	

We aim to show that there are choices $n_q$ of $\n_q$ such that the corresponding sets $e_q'$ and $e_q''$ obey
\begin{equation}\label{3.13}
\left|V'\setminus \bigcup_{q\in \Q'}e_q'\right| \leq \frac{x}{10\log x}.	
\end{equation}
and 
\begin{equation}\label{3.14}
\left|V''\setminus \bigcup_{q\in \Q''}e_q''\right| \leq \frac{x}{10\log x}.	
\end{equation}
Once it is done, we can set $b\equiv -n_q \pmod{q}$ for $q\in \Q'$ and $b\equiv n_q+N \pmod{q}$ for $q\in Q''$, and make an arbitrary choice of $b\pmod{q}$ for $q\in(z,x/2]\setminus (\Q'\cup \Q'')$. Then, since for any $q\in \Q'$ 
$$
e_q' \subset \{n\in \Z: n\equiv n_q \pmod{q} \},
$$
and for any $q\in \Q''$
$$
e_q'' \subset \{n\in \Z: n\equiv n_q \pmod{q} \},
$$
we get
\begin{equation*}
\Big|\left(S_{x/2}-b\right)\cap[-y,y]\Big| \leq \Big|(S'\cap[-y,y])\setminus V'\Big|+\Big|V'\setminus \bigcup_{q\in \Q'}e_q'\Big|
\leq \frac{x}{10\log x}+\frac{x}{10\log x}=\frac{x}{5\log x},
\end{equation*}
and, similarly,
\begin{equation*}
\Big|\left(S_{x/2}-N+b\right)\cap[-y,y]\Big| \leq \Big|(S''\cap[-y,y])\setminus V''\Big|+\Big|V''\setminus \bigcup_{q\in \Q''}e_q''\Big|
\leq \frac{x}{10\log x}+\frac{x}{10\log x}=\frac{x}{5\log x},
\end{equation*}
and (\ref{1.2}) follows.

We will apply the covering lemma twice: for $V'$ and $V''$, to get (\ref{3.13}) and (\ref{3.14}), respectively. These two applications are completely similar, so we consider only the one concerned with $V'$. We take $s=|\Q'|$, $\{ \e_1,...,\e_s\}=\{\e_q': q\in \Q' \}$, $C_2'$ from Theorem 2, and
$$  
\eta=\frac{1}{100C_3(\log x)^{\d}}.
$$
Then, using (\ref{3.1}), we get
$$
C_3\eta|V'| \leq C_3\eta |S'\cap[-y,y]| \leq \frac{9\s y}{100(\log x)^{\d}}\sim \frac{9y}{100(\log x)^{\d}\log z}\leq \frac{x}{10\log x}
$$
for $x$ large enough, and (\ref{3.13}) follows from (\ref{3.11}). Thus it suffices to verify the conditions of the covering lemma. 

Firstly, by (\ref{2.1}),
$$
|\e'_{q}| \leq KH_q \leq \frac{Ky}{z} \leq \frac{K(\log x)^{1/2}}{\log\log x} \leq \frac{K(\log y)^{1/2}}{\log\log y}, 
$$
so (\ref{3.6}) follows. Further, let $n\in V'$ and $q\in \Q'$. By (\ref{3.12}), (\ref{3.2}), and (\ref{2.1}),
$$
\P(n\in \e_q') = \sum_{h=1}^{KH_q}\P(\n_q=n-qh)	\ll y^{-1}\sum_{h\leq KH_q}\l(H_q; q,n-qh) \ll y^{-1}H_q\s_2^{-KH_q} \leq y^{-9/10},
$$
and (\ref{3.7}) follows. We also have from (\ref{3.3})
$$
\sum_{q\in \Q'}\P(n\in \e_q') = \sum_{q\in \Q'}\sum_{h\leq KH_q}\frac{\l(H_q; q,n-qh)}{\sum_{-(K+1)y<n'\leq y}\l(H_q;q,n')}=C_2'+O\left((\log x)^{-\d(1+\eps)}\right),	
$$
which confirms (\ref{3.9}). Now we turn to (\ref{3.8}). If $v$ and $v'$ both lie in $\e_q$, then $q$ divides $|v-v'|$, which is at most $2y$. But $q>z>\sqrt{2y}$, thus there can be at most one such $q$ for any fixed $v\neq v'$. Therefore (\ref{3.8}) follows from (\ref{3.7}).
 
This completes the proof of (\ref{1.2}), and thus Theorem \ref{th1} follows from Theorem \ref{th2}.
 \end{proof}

\section{The third reduction}\label{sec5}

In this section we deduce Theorem \ref{th2} from the following theorem.

\begin{theorem}\label{th3}
Let $M\geq2$. Then

$(i)$ One has 

\begin{equation}\label{4.1}
\E\Bigl|\S'\cap[-y,y]\Bigr|=\E\Bigl|\S''\cap[-y,y]\Bigr|=\s(2y+1);
\end{equation}
	
\smallskip	
	
$(ii)$ For every $H\in \HH$, every $j\in\{0,1,2\}$, and $i\in\{1,3\}$,	
	
\begin{equation}\label{4.2}
\E\sum_{q\in \Q_{H,i}}\left(\sum_{-(K+1)y<n\leq y}\bl(H;q,n)\right)^j = \left(1+O\left(\frac{1}{H^{M-2}}\right)\right)((K+2)y)^j|\Q_{H,i}|;
\end{equation}	
	
\smallskip		
	
$(iii)$ For every $H\in \HH$ and $j\in\{0,1,2\}$, 
	
\begin{equation}\label{4.3}
\E\sum_{n\in \S'\cap[-y,y]}\left(\sum_{q\in \Q_{H,1}}\sum_{h\leq KH}\bl(H;q,n-qh)\right)^j = \left(1+O\left(\frac{1}{H^{M-2}}\right)\right)\left(\frac{|\Q_{H,1}|\cdot\lfloor KH\rfloor }{\s_2}\right)^j\s (2y+1)	
\end{equation}		
and
\begin{equation}\label{4.4}
\E\sum_{n\in \S''\cap[-y,y]}\left(\sum_{q\in \Q_{H,3} }\sum_{h\leq KH}\bl(H;q,n-qh)\right)^j = \left(1+O\left(\frac{1}{H^{M-2}}\right)\right)\left(\frac{|\Q_{H,3}|\cdot\lfloor KH\rfloor}{\s_2}\right)^j\s (2y+1).
\end{equation}	
\end{theorem}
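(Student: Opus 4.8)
\textbf{Plan of proof for Theorem \ref{th3}.} The strategy is a direct moment computation: each of the left-hand sides is an expectation over the uniform random shift $\b \pmod{P}$, which splits as a product over primes $p\leq z$ by CRT, and we expand the $j$-th power into a sum over $j$-tuples and interchange summation and expectation. For part $(i)$, we simply write $\E|\S'\cap[-y,y]| = \sum_{|n|\leq y}\P(n\in S_z-\b) = \sum_{|n|\leq y}\prod_{p\leq z}(1-1/p) = \s(2y+1)$, exactly as in the proof of Proposition \ref{prop1}; the computation for $\S''$ is identical since $S_z-N+\b$ is also a uniform random translate of $S_z$.

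For part $(ii)$, fix $H$ and $i$ and recall from (\ref{2.85}) that $\S'=\S_1'\cap\S_2'$ with $\b_1, \b_2$ the reductions of $\b$ to the coprime moduli $P_1=\prod_{p\leq H^M}p$ and $P_2=\prod_{H^M<p\leq z}p$, so $\b_1$ and $\b_2$ are \emph{independent} uniform random classes. The weight $\bl(H;q,n)$ defined in (\ref{2.11})--(\ref{2.12}) is built so that, conditioning first on $\b_1$ (which determines the set $\bAP'(KH;q,n)=\{n+qh:1\leq h\leq KH\}\cap\S_1'$), the factor $\s_2^{-|\bAP'|}$ exactly compensates the probability (over $\b_2$) that this AP survives in $\S_2'$: indeed $\P_{\b_2}(\bAP'\subset\S_2') = \prod_{p}(1-1/p)^{\#\{\text{residues hit}\}}$, and when $H^M<p$ the elements $n+qh$, $1\leq h\leq KH$, are distinct mod $p$ (since $q<y/H<p$... this needs $q\cdot KH < $ gap, i.e. the AP has length $KH$ and common difference $q\leq y/H$, so spans $\leq Ky/H \cdot H= $ — here is where $M>$ some bound and $K$ come in), so each occupies a distinct class and the survival probability is $\s_2^{|\bAP'|}$ up to an error controlled by the rare coincidences modulo small primes $p\in(H^M,z]$. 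Hence $\E_{\b_2}\bl(H;q,n)\approx 1$ with relative error $O(H^{-(M-2)})$ coming from pairs $h\neq h'$ with $n+qh\equiv n+qh'\pmod p$ for some $p\in(H^M,z]$, of which there are $\ll (KH)^2/H^M$. Summing $\E\bl\approx 1$ over $-(K+1)y<n\leq y$ gives the $j=1$ case of (\ref{4.2}), namely $(1+O(H^{-(M-2)}))(K+2)y$; the $j=0$ case is trivial and the $j=2$ case expands $(\sum_n\bl)^2=\sum_{n,n'}\bl\bl'$ and requires showing the diagonal $n=n'$ contributes a lower-order term while off-diagonal pairs essentially factor, again using $q>z$ and independence over $\b_1$ to control the joint survival. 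Finally sum over $q\in\Q_{H,i}$ and pull out $|\Q_{H,i}|$.

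For part $(iii)$, the structure is the same but now the outer sum runs over $n\in\S'\cap[-y,y]$, so we must also account for the event $n\in\S'$. Writing $\sum_{n\in\S'\cap[-y,y]}(\cdots)^j = \sum_{|n|\leq y}1_{n\in\S'}(\sum_{q\in\Q_{H,1}}\sum_{h\leq KH}\bl(H;q,n-qh))^j$ and expanding, we get sums over $j$-tuples $(q_1,h_1),\dots,(q_j,h_j)$ of $\E[1_{n\in\S'}\prod_t\bl(H;q_t,n-q_th_t)]$. The key point is to separate the "small-prime" part ($p\leq H^M$, governed by $\b_1$) from the "large-prime" part ($H^M<p\leq z$, governed by $\b_2$): conditioning on $\b_1$, the product of $\s_2$-powers in the $\bl$'s is designed so that $\E_{\b_2}[1_{n\in\S_2'}\prod_t\bl] = \s_2 \cdot(1+\text{small error})$ whenever the relevant AP's together with $\{n\}$ occupy distinct residues mod each $p\in(H^M,z]$ — this is why the answer carries a single factor $\s$ (the $\s_1$ from the $\b_1$-probability that $n\in\S_1'$, times $\s_2$ from survival) and a factor $\s_2^{-1}$ per AP in the normalization, producing $(|\Q_{H,1}|\lfloor KH\rfloor/\s_2)^j\s(2y+1)$ after summing over $n$, $q$, $h$. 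The error $O(H^{-(M-2)})$ again comes from bounding the number of tuples where some coincidence modulo a prime in $(H^M,z]$ forces a collision, which is $\ll (KH)^2/H^M$ relative to the main term.

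\textbf{Main obstacle.} The bookkeeping in part $(iii)$, $j=2$, is the delicate step: one has a double AP structure $\bAP'(KH;q,n-qh)$ for two pairs $(q,h),(q',h')$ together with the point $n$, all needing to lie in $\S_2'$, and one must verify (a) that when the two AP's come from \emph{distinct} primes $q\neq q'$ the two survival events are "independent enough" that the $\s_2$-powers in the normalization cancel correctly and leave a clean product, using crucially that $q,q'>z>\sqrt{2y}$ so that no prime $p\leq z$ can divide both $q(h-h'')$-type differences, and (b) that the diagonal contributions ($q=q'$, or near-collisions of AP elements) are genuinely of relative size $O(H^{-(M-2)})$ and not larger — this forces the constraint relating $M$, $K$ and the requirement $M\geq 2$ in the hypothesis, since we need $(KH)^2$ collisions weighed against $H^M$. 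Getting the powers of $\s_2$ to match exactly (rather than up to an uncontrolled constant) is what makes the definitions (\ref{2.11})--(\ref{2.12}) with the precise exponent $-|\bAP'|$ essential, and checking this identity carefully is the real content of the proof.
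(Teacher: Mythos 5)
Your high-level plan is the same as the paper's: decompose $\b$ into independent components $\b_1\pmod{P_1}$ and $\b_2\pmod{P_2}$, note that the normalization $\s_2^{-|\bAP'|}$ in the definition of $\bl$ is designed to cancel the $\b_2$-probability that the relevant AP lands in $\S_2'$, and so each inner expectation is $\approx 1$; then sum. Parts $(i)$ and the $j=0$ cases are handled correctly. But the error analysis has a genuine gap, and it concerns exactly the step you flagged as the "real content."

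Your stated mechanism for the relative error $O(H^{-(M-2)})$ — "pairs $h\neq h'$ with $n+qh\equiv n+qh'\pmod p$ for some $p\in(H^M,z]$, of which there are $\ll (KH)^2/H^M$" — is not what actually happens. Note first that your inequality is backwards: one has $p\leq z$ while $q>z$ (since $q>y/(\xi H)$ and $H\leq y/(\xi z)$), so $p<q$ and $\gcd(p,q)=1$. Then $n+qh\equiv n+qh'\pmod p$ would force $p\mid(h-h')$, impossible because $|h-h'|<KH<H^M<p$ for large $x$. So there are \emph{no} such single-prime collisions, and they cannot be the source of the error. The actual error in $\P(\U\subset\S_2')=\s_2^{|\U|}(1+\dots)$ has two pieces (this is Lemma~\ref{lem5.1}): a pure Mertens-type term $|\U|^2 H^{-M}$, coming from $\prod_p(1-|\U\bmod p|/p)$ vs. $\prod_p(1-1/p)^{|\U|}$, and a term involving the function $E_A(\cdot;H)$ from \eqref{5.1}, which tracks divisibility of differences $v-v'$ (with $v,v'$ ranging over the relevant APs) by \emph{squarefree composite} moduli $d\in\D_H$. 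For part $(ii)$, $j=1$, the $E$-terms happen to vanish (the differences are $q(h-h')$, coprime to all $d>1$); but for $j=2$ and throughout part $(iii)$ the differences take the form $q(h-h')+s$ with a nontrivial shift $s$ (such as $n_1-n_2$, or $q_1h_1'-q_1h_1-q_2h_2'+q_2h_2$), and bounding $\sum_{q}E_A(qr+s;H)$ is a genuine task requiring the Brun–Titchmarsh inequality together with the summation Lemma~\ref{lem5.2}, yielding the key estimate \eqref{5.4}. Your proposal does not engage with this machinery, and without it the claimed $O(H^{-(M-2)})$ relative error in the $j=2$ and part $(iii)$ cases is unjustified. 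The correct ideas you do state — that $n\in\S'_1$ forces $n$ to be a shared point of the two APs in part $(iii)$, $j=2$, so that exactly one factor $\s_2^{-1}$ survives; that $q\neq q'$ gives disjoint APs except for $n$ because $q,q'>z$; and that diagonal/near-diagonal terms must be shown negligible — all match the paper, but they are the routine bookkeeping layer, not the delicate part. The delicate part is the error control via $E_A$, Lemma~\ref{lem5.2}, and Brun–Titchmarsh, which is missing.
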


\bigskip 

We remind the reader that in Theorem \ref{th3} the random variables $\S', \S''$, and $\bl$ are defined in terms of the random variable $\b$ chosen uniformly in $\Z/P\Z$, not the random variables $\n_q$ we used in the
previous section.

\begin{rem}\label{rem4.1}
It can be shown  that	
$$
\E\Bigl|\S'\cap[-y,y]\Bigr|^2=\E\Bigl|\S''\cap[-y,y]\Bigr|^2=\left(1+O\left(\frac{1}{\log y}\right)\right)\s(2y+1);
$$
(and it is actually the relation (4.2) of \cite{sieved}). This implies that both sets $S'\cap[-y,y]$ and $S''\cap[-y,y]$ have size $(2+o(1))\s y$ with probability $1-o(1)$, and thus in fact almost all $\b \pmod{P(z)}$ are good for Theorem \ref{th2}. However, we decided to make the proof slightly shorter and not to provide a proof for the above relation. Thus we use only first moment in (\ref{4.1}) and show that (at least) half of choices of $\b \pmod{P(z)}$ are good for our purpose.  
\end{rem}	

\smallskip 

\begin{proof}[Deduction of Theorem \ref{th2} from Theorem \ref{th3}]

Firstly, we show that (\ref{3.1}) holds with probability at least $1/2$. From (\ref{4.1}) we see that
$$
\E\Bigl|(\S'\cup\S'')\cap[-y,y]\Bigr|\leq \E\Bigl|\S'\cap[-y,y]\Bigr|+ \E\Bigl|\S''\cap[-y,y]\Bigr|=2\s(2y+1),	
$$
and thus from Markov's inequality 
$$
\P\left(\Bigl|(\S'\cup\S'')\cap[-y,y]\Bigr| \geq 4\s(2y+1)\right) \leq 1/2. 
$$
Hence, we have
\begin{equation}\label{4.5}
\Bigl|(\S'\cup \S'')\cap[-y,y]\Bigr| \leq 9\s y	
\end{equation}
with probability at least $1/2$.

Now we work on parts (ii) and (iii) of Theorem 2. Fix $H\in\HH$. From (\ref{4.2}) we have
\begin{equation}\label{4.6}
\E\sum_{q\in \Q_{H,1}}\left(\sum_{-(K+1)y<n\leq y}\bl(H;q,n) - (K+2)y\right)^2 \ll \frac{y^2|\Q_{H,1}|}{H^{M-2}};
\end{equation}	
Now let $\bQ_H'$ be the (random) set of $q\in \Q_{H,1}$ for which
\begin{equation}\label{4.7}
\left|\sum_{-(K+1)y<n\leq y}\bl(H;q,n) - (K+2)y\right| \leq \frac{y}{H^{1+\eps}};
\end{equation}	
By estimating the left-hand side of (\ref{4.6}) from below by the sum over $q\in \Q_{H,1}\setminus\bQ'_H$, we find that
\begin{equation}\label{4.8}
\E|\Q_{H,1}\setminus\bQ'_H|	\ll \frac{|\Q_{H,1}|}{H^{M-4-2\eps}}.
\end{equation} 
Now we set
$$
\bQ'=\bigcup_{H\in\HH}\bQ_H' \subseteq \{q\in\Q: q\equiv1\pmod4 \}.
$$
In completely similar way we define the random set 
$$
\bQ''=\bigcup_{H\in\HH}\bQ_H'' \subseteq \{q\in\Q: q\equiv3\pmod4 \},
$$
where, for each $H\in\HH$, we denote by $\bQ_H''$ the random set of $q\in \Q_{H,3}$ for which
\begin{equation}\label{4.9}
\left|\sum_{-(K+1)y<n\leq y}\bl(H;q,n) - (K+2)y\right| \leq \frac{y}{H^{1+\eps}};
\end{equation}	
again we have 
\begin{equation}\label{4.10} 
\E|\Q_{H,3}\setminus\bQ'_H|	\ll \frac{|\Q_{H,3}|}{H^{M-4-2\eps}}.
\end{equation}
for all $H\in\HH$.

Now we turn to the condition (iii) of Theorem \ref{th2}. Fix $H$. Similarly to (\ref{4.6}), from (\ref{4.3}) we have
\begin{equation}\label{4.11}
\E\sum_{n\in \S'\cap[-y,y]}\left(\sum_{q\in \Q_{H,1}}\sum_{h\leq KH}\bl(H;q,n-qh)-\frac{|\Q_{H,1}|\cdot\lfloor KH\rfloor}{\s_2}\right)^2 \ll \frac{1}{H^{M-2}}\left(\frac{|\Q_{H,1}|\cdot\lfloor KH\rfloor}{\s_2}\right)^2\s y.	
\end{equation}		
Let $\bmE'_H$ be the set of $n\in \S'\cap[-y,y]$ such that
\begin{equation}\label{4.12}
\left|\sum_{q\in \Q_{H,1}}\sum_{h\leq KH}\bl(H;q,n-qh)-\frac{|\Q_{H,1}|\cdot\lfloor KH\rfloor}{\s_2}\right| \geq \frac{|\Q_{H,1}|\cdot\lfloor KH\rfloor}{\s_2H^{1+\eps}}.	
\end{equation}		
Then, since $M>6$ and $\eps$ is small, (\ref{4.11}) implies that
$$
\E|\bmE'_H| \ll \frac{\s y}{H^{1+2\eps}},
$$
and, hence, $|\bmE'_H|\leq \frac{\s y}{H^{1+\eps}}$ with probability $1-O(H^{-\eps})$.

Now we estimate the contribution from ``bad'' primes $q\in \Q_{H,1}\setminus \bQ'_H$. For any $h\leq KH$, we get from Cauchy-Schwarz inequality (for vector functions)
\begin{multline*}
\E\sum_{q\in \Q_{H,1}\setminus\bQ'_H}\sum_{n\in \S'\cap[-y,y]}\bl(H;q,n-qh) \\
\leq\left(\E|\Q_{H,1}\setminus\bQ'_H|\right)^{1/2}\left(\sum_{\Q_{H,1}\setminus\bQ'_H}\left|\sum_{-(K+1)y<n\leq y}\bl(H;q,n)\right|^2\right)^{1/2}, 	
\end{multline*}
where we extended the range of summation of $\bl(H;q,\cdot)$ to the larger interval $(-(K+1)y,y]$ (note that the weights $\bl(H;q,\cdot)$ are non-negative). Further, by the triangle inequality, (\ref{4.6}), and (\ref{4.8}), 
\begin{multline*}
\E\sum_{\Q_{H,1}\setminus\bQ'_H}\left|\sum_{-(K+1)y<n\leq y}\bl(H;q,n)\right|^2 \\
\leq2\E\sum_{\Q_{H,1}\setminus\bQ'_H}\left(\left|\sum_{-(K+1)y<n\leq y}\bl(H;q,n)-(K+2)y\right|^2+(K+2)^2y^2\right) \ll \frac{y^2|\Q_{H,1}|}{H^{M-4-2\eps}}.	
\end{multline*}
Combining two latter estimates (and using (\ref{4.8} again), we get, after summing over all $h\leq KH$,
$$
\E\sum_{n\in \S'\cap[-y,y]}\sum_{q\in \Q_{H,1}\setminus \bQ'_H}\sum_{h\leq KH} \bl(H;q,n-qh) \ll \frac{y|\Q_{H,1}|}{H^{M-5-2\eps}}.
$$
Let $\bmF'_H$ be the set of $n\in\S'\cap[-y,y]$ such that
\begin{equation}\label{4.13} 
\sum_{q\in \Q_{H,1}\setminus \bQ'_H}\sum_{h\leq KH} \bl(H;q,n-qh) \geq \frac{|\Q_{H,1}|\cdot\lfloor KH\rfloor}{\s_2H^{1+\eps}}.	
\end{equation} 
Then 
$$
\E|\bmF'_H| \ll \frac{\s_2y}{H^{M-5-3\eps}} \ll \frac{\s y \log H}{H^{M-5-3\eps}}
$$ 
and by Markov's inequality
$$
|\bmF'_H| \leq \frac{\s y}{H^{1+\eps}} 
$$
with probability $1-O(H^{-(M-6-5\eps)})$. Since $\eps<(M-6)/6$, we have $M-6-5\eps>\eps$, and the last probability becomes $1-O(H^{-\eps})$.

Analogously, using (\ref{4.4}), we can define the set $\bmE''_H$ of $n\in \S''\cap[-y,y]$ with
\begin{equation}\label{4.14}
\left|\sum_{q\in \Q_{H,3}}\sum_{h\leq KH}\bl(H;q,n-qh)-\frac{|\Q_{H,3}|\cdot\lfloor KH\rfloor}{\s_2}\right| \geq \frac{|\Q_{H,3}|\cdot\lfloor KH\rfloor}{\s_2H^{1+\eps}},	
\end{equation}		 
and the set $\bmF''_H$ of $n\in \S''\cap[-y,y]$ with 
\begin{equation}\label{4.15} 
\sum_{q\in \Q_{H,3}\setminus \bQ''_H}\sum_{h\leq KH} \bl(H;q,n-qh) \geq \frac{|\Q_{H,3}|\cdot\lfloor KH\rfloor}{\s_2H^{1+\eps}};	
\end{equation} 
 we have 
$$ 
|\bmE''_H|, |\bmF''_H|  \leq \frac{\s y}{H^{1+\eps}}
$$
with probability $1-O(H^{-\eps})$. Since $\sum_{H\in\HH}H^{-\eps}\ll (\log x)^{-\d\eps}$, we see that the probability that there is $H\in\HH$ such that least one the sets $\bmE'_H, \bmF'_H, \bmE''_H, \bmF''_H$ has size greater than $(\s y)H^{-1-\eps}$ is $o(1)$.

Now we are ready to make a choice of $\b \pmod{P(z)}$. We consider the event that (\ref{4.5}) holds and that for each $H\in\HH$, all the four sets $\bmE'_H, \bmF'_H, 
\bmE''_H, \bmF''_H $ have size at most $(\s y)H^{-1-\eps}$. By the above discussion, this event holds with probability at least $1/2-o(1)$). From now, we fix a $\b\pmod{P(z)}$ such that it is so, and thus all of our random sets and weights become deterministic.  

Let
$$
\mathcal{N}'=S'\cap[-y,y]\setminus\bigcup_{H\in\HH}\left(\mE'_H\cup\mF'_H\right)
$$
and
$$
\mathcal{N}''=S''\cap[-y,y]\setminus\bigcup_{H\in\HH}\left(\mE''_H\cup\mF''_H\right).
$$
We verify (\ref{3.3}) for $n\in\mathcal{N}'$, and (\ref{3.4}) will follow from our construction in absolutely similar way. The number of exceptional elements satisfies
$$
\left|\bigcup_{H\in\HH}\left(\mE'_H\cup\mF'_H\right)\right| \leq \frac{\s y}{(\log x)^{(1+\eps)\d}},
$$
which is smaller than $\frac{x}{10\log x}$ for large $x$. We fix arbitrary $n\in \mathcal{N}'$. For such $n$, the inequalities opposite to (\ref{4.12}) and (\ref{4.13}) hold, and therefore for each $H\in\HH$,
$$
\sum_{q\in \Q_{H,1}}\sum_{h\leq KH}\l(H;q,n-qh)=\left(1+O\left(\frac{1}{(\log x)^{(1+\eps)\d}}\right)\right) \frac{|\Q_{H,1}|\cdot\lfloor KH\rfloor}{\s_2} 
$$
due to our choice of $M$.
Summing over all $H\in\HH$, we have
\begin{equation}\label{4.16}
\sum_{q\in \Q'}\sum_{h\leq KH}\l(H;q,n-qh)=\left(1+O\left(\frac{1}{(\log x)^{(1+\eps)\d}}\right)\right)C_2'(K+2)y
\end{equation}
with (recall that $\s_2=\s_2(H)$)
$$
C_2'=\frac{1}{(K+2)y}\sum_{H\in\HH} \frac{|\Q_{H,1}|\cdot\lfloor KH\rfloor}{\s_2}.
$$
Note that $C_2'$ depends on $x,K,M,\xi,$ and $\d$, but not on $n$. Since 
$$
\lfloor KH \rfloor=KH(1+O(1/H))=KH(1+O(\log x)^{-\d})
$$
and
$$
\s_2^{-1}=\prod_{H^M<p\leq z}(1-1/p)^{-1}\sim\frac{\log z}{M\log H},
$$
we get, using (\ref{2.5}), 
$$
C_2'\sim \frac{K}{(K+2)y} \cdot \frac12(1-1/\xi)\sum_{H\in\HH}\frac{y/H}{\log x}\cdot\frac{H\log z}{M\log H} \sim \frac{K(1-1/\xi)}{2M(K+2)}\sum_{H\in\HH}\frac{1}{\log H},
$$
as $x\to\infty$. Recalling the definition of $\HH$, we see that
$$
C_2'\sim \frac{K(1-1/\xi)}{2M(K+2)\log \xi}\sum_{j}\frac{1}{j},
$$
where $j$ runs over the interval
$$
\frac{\d\log\log x}{\log \xi} \leq j \leq \frac{(1/2+o(1))\log\log x}{\log \xi}.
$$
We thus obtain
$$
C_2'\sim \frac{K(1-1/\xi)}{2M(K+2)\log \xi}\log(1/(2\d)).
$$
Recall the condition (\ref{1.3}) on $\d$ and the fact that $K$ and $x$ are sufficiently large, $M$ is sufficiently close to $6$, $\xi$ is close enough to $1$ (all in terms of $\d$). Then we see that
$$
10^{2\d}\leq C_2'\leq 100.
$$
This together with (\ref{4.16}) implies (\ref{3.3}). Arguing similarly, one can obtain (\ref{3.4}) for $n\in \mathcal{N}''$. The claim follows.  
\end{proof}

It remains to establish Theorem \ref{th3}. This is the aim of the last section of the paper.

\section{Computing correlations}\label{sec6}

We first introduce some notation. For $H\in\HH$, let $\D_H$ be the collection of square-free numbers $d$, all of whose prime divisors lie in $(H^M,z]$.
Further, for $A>0$, let 
\begin{equation}\label{5.1}
E_A(m;H)=\sum_{d\in \D_H\setminus\{1\}}\frac{A^{\o(d)}}{d}1_{m \equiv 0\pmod d}.	
\end{equation}
Note that $E_A(m;H)=E_A(-m;H)$.

\bigskip 

We need the following two lemmas (see Lemmas 5.1 and 5.2 of \cite{sieved}).

\begin{lem}\label{lem5.1} 
Let $10<H<z^{1/M}$, $1\leq l\leq 10KH$, and $\U\subset \V$ be two finite sets with $|\V|=l$. Then 	
$$
\P(\U\subset \S'_2)=\P(\U\subset \S''_2)=\s_2^{|\U|}\Biggl(1+O\Bigl(|\U|^2H^{-M}+l^{-2}\sum_{\substack{v,v'\in\V \\ v\neq v'}}E_{2l^2}(v-v';H)\Bigr)\Biggr).
$$		
\end{lem}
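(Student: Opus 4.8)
\textbf{Proof plan for Lemma \ref{lem5.1}.} The plan is to express the probability $\P(\U\subset\S_2')$ via inclusion--exclusion on the primes $p\in(H^M,z]$ and then estimate the error. Recall $\S_2'=S_{H^M}-\b_2$ where $\b_2\equiv\b\pmod{P_2}$ is uniform on $\Z/P_2\Z$, and $P_2=\prod_{H^M<p\le z}p$. Since $\b$ is uniform modulo $P(z)$, the residues $\b\bmod p$ are independent and uniform over all primes $p\le z$; in particular the family $(\b_2\bmod p)_{H^M<p\le z}$ is independent and uniform. For $\U=\{u_1,\dots,u_k\}$, the event $\U\subset\S_2'$ is the event that for every $p\in(H^M,z]$ none of the $u_i+\b_2$ is divisible by $p$, i.e.\ $\b_2\not\equiv -u_i\pmod p$ for all $i$ and all such $p$. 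By independence over $p$, $\P(\U\subset\S_2')=\prod_{H^M<p\le z}\bigl(1-\nu_p(\U)/p\bigr)$, where $\nu_p(\U)=|\{-u_i\bmod p: 1\le i\le k\}|$ is the number of distinct residue classes occupied by $\U$ modulo $p$. The same computation applies verbatim to $\S_2''=S_{H^M,z}-N+\b_2$, giving the claimed equality of the two probabilities (the shift by $N$ is harmless since $\b_2$ is uniform); so I only treat $\S_2'$.

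The next step is to compare $\prod_p(1-\nu_p(\U)/p)$ with the ``main term'' $\s_2^{|\U|}=\prod_p(1-1/p)^k$. Write $\nu_p(\U)=k-r_p$, where $r_p=\sum_{\{v,v'\}\subseteq\U}1_{v\equiv v'(p)}$ counts (with multiplicity, when three or more coincide, but for $p>H^M\ge$ something large this is essentially a sum over pairs) the collisions; more precisely $k-\nu_p(\U)\le \sum_{v\ne v'\in\U}1_{p\mid v-v'}$, with equality unless three elements collide mod $p$. So $1-\nu_p(\U)/p=(1-1/p)^k\cdot\bigl(1+O(\cdot)\bigr)$ where the correction per prime is governed by $\sum_{v\ne v'}1_{p\mid v-v'}/p$ plus a term $O(k^2/p^2)$ coming from the binomial expansion of $(1-1/p)^k$ against $1-(k-r_p)/p$. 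Taking the product over $p\in(H^M,z]$ and using $\log(1+t)=t+O(t^2)$ together with $p>H^M$, the total multiplicative error is
$$
\exp\Bigl(O\Bigl(\sum_{H^M<p\le z}\frac{k^2}{p^2}\Bigr)+O\Bigl(\sum_{H^M<p\le z}\frac1p\sum_{\substack{v,v'\in\V\\ v\ne v'}}1_{p\mid v-v'}\Bigr)\Bigr)=1+O\Bigl(k^2H^{-M}+\sum_{\substack{v,v'\in\V\\v\ne v'}}\sum_{H^M<p\le z}\frac{1_{p\mid v-v'}}{p}\Bigr),
$$
since $\sum_{p>H^M}p^{-2}\ll H^{-M}$. (One uses $|\U|=k\le l=|\V|\le 10KH$ to see these error terms are $o(1)$, so that the exponential may be linearized.) It remains to relate $\sum_{H^M<p\le z}1_{p\mid m}/p$ for $m=v-v'$ to $E_{2l^2}(m;H)$. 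Any $d\in\D_H$ dividing $m$ is a squarefree product of primes from $(H^M,z]$; keeping only the divisors $d=p$ that are single primes gives $\sum_{H^M<p\le z}1_{p\mid m}/p\le\sum_{d\in\D_H\setminus\{1\}}1_{d\mid m}A^{\o(d)}/d=E_A(m;H)$ for any $A\ge 1$, in particular $A=2l^2$; summing over the $O(l^2)$ pairs and factoring out $l^{-2}$ yields exactly $l^{-2}\sum_{v\ne v'}E_{2l^2}(v-v';H)$, as required. (The generous weight $A^{\o(d)}=(2l^2)^{\o(d)}$ is not needed for the single-prime divisors here; it is presumably inserted so that this lemma's error term matches the shape needed when higher correlations — products of several $E_A$'s — are expanded in the later lemmas, so I keep it.)

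The main obstacle, and the only genuinely delicate point, is bookkeeping the collisions modulo $p$ correctly: one must be careful that $k-\nu_p(\U)$ can exceed $\sum_{v\ne v'}1_{p\mid v-v'}$ is false — in fact it is always $\le$ that sum — but when several of the $u_i$ share a residue mod $p$ the naive pair count overshoots, which only helps (it makes the bound an upper bound for the error, which is what we want). The other point requiring a little care is the linearization of the product: one needs the two error sums to be genuinely $o(1)$ before writing $\prod(1+t_p)=1+O(\sum t_p)$, and this is where the hypotheses $H>10$, $H<z^{1/M}$ (ensuring $(H^M,z]$ is a nonempty range with $H^M$ large) and $l\le 10KH$ (bounding $k$) are used; given those, $k^2H^{-M}\le (10KH)^2H^{-M}\ll H^{2-M}\to0$ for $M\ge 2$... more precisely for $M>2$, and the pair-sum term is controlled by the fact that each $E_{2l^2}(m;H)$ is small on average. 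Everything else is the routine Mertens-type manipulation $\prod_{H^M<p\le z}(1-1/p)=\s_2$ built into the definition. I would present it in the order: (1) reduce to the product formula via independence; (2) expand each factor and linearize the logarithm; (3) bound the two error sums by $k^2H^{-M}$ and by $E_{2l^2}$ over pairs; (4) remark that the identical argument gives the $\S_2''$ statement.
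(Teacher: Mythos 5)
The paper does not actually reprove this lemma: it quotes Lemma~5.1 of \cite{sieved} verbatim and only adds the Remark that the $\S_2''$ version follows from the $\S_2'$ version by the change of variables $\b_2\mapsto -N+\b_2$ (respectively $\b_2\mapsto -\b_2$), since $\b_2$ is uniform. Your step (4) reproduces this observation correctly, and your steps (1)--(2) (independence over $p$, reducing to $\prod_{H^M<p\le z}(1-\nu_p(\U)/p)$, comparing to $\s_2^{|\U|}$ and isolating a collision term $r_p=|\U|-\nu_p(\U)\le\sum_{v\ne v'\in\U}1_{p\mid v-v'}$ and an $O(|\U|^2/p^2)$ term) are the standard and correct first moves.

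The genuine gap is at the very end. After bounding the collision contribution by
$\sum_{v\ne v'\in\V}\sum_{H^M<p\le z}1_{p\mid v-v'}/p$ and then by $\sum_{v\ne v'\in\V}E_{2l^2}(v-v';H)$, you write ``summing over the $O(l^2)$ pairs and factoring out $l^{-2}$ yields exactly $l^{-2}\sum_{v\ne v'}E_{2l^2}(v-v';H)$.'' That is not a valid manipulation: you have bounded the error by $\sum_{v\ne v'}E_{2l^2}$, which is a factor $l^2$ \emph{larger} than the claimed bound $l^{-2}\sum_{v\ne v'}E_{2l^2}$. The factor $l^{-2}$ does not come from any averaging over pairs. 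It comes from the weight $A^{\o(d)}$ in the definition (\ref{5.1}) of $E_A$: the $\o(d)=1$ part of $E_A(m;H)$ is exactly $A\sum_{H^M<p\le z}1_{p\mid m}/p$, so the sharp inequality is
$$
\sum_{H^M<p\le z}\frac{1_{p\mid m}}{p}\ \le\ \frac{1}{A}\,E_A(m;H),
$$
and the choice $A=2l^2$ is precisely what delivers the $l^{-2}$. This also contradicts your side remark that the weight $A^{\o(d)}$ ``is not needed for the single-prime divisors here'' and ``is presumably inserted so that this lemma's error term matches the shape needed\dots in the later lemmas'': the weight is doing real work inside this very lemma, both for the $l^{-2}$ normalization and, more subtly, for absorbing the $\o(d)\ge2$ contributions that inevitably appear once the product $\prod_p(1+\theta_p)$ is expanded beyond first order in $\theta_p$.

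A secondary, smaller concern: the passage $\exp(X)=1+O(X)$ requires $X=O(1)$, and you only assert this; when the pair-sum error happens to be large the logarithmic linearization is not justified as written. A fully rigorous argument has to either expand the product $\prod_p(1-\nu_p/p)$ directly as a signed sum over squarefree $d\in\D_H$ (which is exactly where the multi-prime divisors with the $A^{\o(d)}$ weight appear naturally), or dispose of the case of large error separately. Since the paper simply cites \cite{sieved} for this statement, there is no in-paper proof to compare against line by line, but the two issues above are real gaps in your reconstruction.
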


\begin{rem}
In \cite{sieved} Lemma \ref{lem5.1} is in fact formulated for the probability $\P(\U\subset \S_2)$, where $\S_2=S_{H^M,z}+\b_2$, but it does make any difference for us since it is easy to see (by making a change of variables $\b_2\mapsto -\b_2$ or $\b_2\mapsto -N+\b_2$) that $\P(\U\subset \S_2)=\P(\U\subset \S'_2)=\P(\U\subset \S''_2)$.
\end{rem}

\begin{lem}\label{lem5.2} 
Let $10<H<z^{1/M}$ and $(m_t)_{t\in T}$ be a finite sequence such that
\begin{equation}\label{5.2}
\sum_{t\in T}1_{m_t\equiv a\pmod{d}} \ll \frac{X}{\varphi(d)}+R	
\end{equation}
for some $X,R>0$, and all $d\in \D_H\setminus\{1\}$ and $a\in\Z/d\Z$. Then for any $A$ with $0<A\leq H^M$ and any integer $j$	
$$
\sum_{t\in T}E_A(m_t+j;H) \ll \frac{XA}{H^M}+R\exp(A\log\log y).
$$
\end{lem}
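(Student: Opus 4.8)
\textbf{Proof plan for Lemma~\ref{lem5.2}.} The plan is to expand the definition \eqref{5.1} of $E_A$, interchange the two (finite) sums, insert the equidistribution hypothesis \eqref{5.2}, and then estimate the two resulting sums over $d\in\D_H$ as truncated Euler products by Rankin's trick. Explicitly, since $E_A(m_t+j;H)=\sum_{d\in\D_H\setminus\{1\}}\frac{A^{\o(d)}}{d}1_{m_t\equiv -j\pmod d}$, summing over $t\in T$ and exchanging the order of summation (permissible because $T$ is finite and $\D_H$ is the finite set of squarefree products of primes in $(H^M,z]$) gives
$$
\sum_{t\in T}E_A(m_t+j;H)=\sum_{d\in\D_H\setminus\{1\}}\frac{A^{\o(d)}}{d}\sum_{t\in T}1_{m_t\equiv -j\pmod d}.
$$
Bounding the inner sum via \eqref{5.2} (applied with the residue $a\equiv -j\pmod d$) by $\ll X/\varphi(d)+R$ yields
$$
\sum_{t\in T}E_A(m_t+j;H)\ll X\sum_{d\in\D_H\setminus\{1\}}\frac{A^{\o(d)}}{d\,\varphi(d)}+R\sum_{d\in\D_H\setminus\{1\}}\frac{A^{\o(d)}}{d},
$$
and, adding back the term $d=1$, both sums become Euler products over the primes of $(H^M,z]$:
$$
1+\sum_{d\in\D_H\setminus\{1\}}\frac{A^{\o(d)}}{d\,\varphi(d)}=\prod_{H^M<p\le z}\Bigl(1+\frac{A}{p(p-1)}\Bigr),\qquad 1+\sum_{d\in\D_H\setminus\{1\}}\frac{A^{\o(d)}}{d}=\prod_{H^M<p\le z}\Bigl(1+\frac{A}{p}\Bigr).
$$

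To finish I would estimate these two products. For the first, $\sum_{H^M<p\le z}\frac1{p(p-1)}\le\sum_{n>H^M}\frac1{n(n-1)}\ll H^{-M}$, and since $0<A\le H^M$ the quantity $A\sum_{H^M<p\le z}\frac1{p(p-1)}$ is $O(1)$; hence $\prod_{H^M<p\le z}(1+\frac{A}{p(p-1)})-1\ll\sum_{H^M<p\le z}\frac{A}{p(p-1)}\ll A/H^M$, so the $X$-part contributes $\ll XA/H^M$. For the second, $1+A/p\le e^{A/p}$ gives $\prod_{H^M<p\le z}(1+\frac Ap)\le\exp\bigl(A\sum_{H^M<p\le z}\frac1p\bigr)$; and by Mertens' theorem $\sum_{H^M<p\le z}\frac1p\le\log\log z-\log\log H^M+c_0$ for a small absolute constant $c_0$. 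Since $H>10$ and $M>6$ (recall \eqref{2.5}) force $\log\log H^M>c_0$, and since $z<y$ for $x$ large (as $\log\log x<(\log x)^{1/2}$), we get $\sum_{H^M<p\le z}\frac1p\le\log\log z\le\log\log y$, hence $\prod_{H^M<p\le z}(1+\frac Ap)-1\le\exp(A\log\log y)$, and the $R$-part contributes $\ll R\exp(A\log\log y)$. Adding the two contributions gives the claim.

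The only delicate point is the bound for $\sum_{H^M<p\le z}\frac1p$ appearing inside the exponential. Because $A$ can be as large as $H^M$, it would not suffice to know merely that $\sum_{H^M<p\le z}\frac1p\le\log\log z+O(1)$: an additive $O(1)$ there would be multiplied by $A$ and escape the exponential. What saves the day is that the product starts at $p>H^M$, so that the saving $-\log\log H^M$, together with $z<y$, absorbs the $O(1)$ Mertens error and pins the exponent down to exactly $\log\log y$. Every other step is a standard Euler-product/Rankin manipulation, so I do not anticipate a genuine obstacle.
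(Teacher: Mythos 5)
Your proof is correct, and it is the standard (indeed essentially the only) way to prove this kind of estimate: expand $E_A$, interchange, insert the equidistribution hypothesis, and bound the two resulting multiplicative sums by Euler products. The paper itself does not reprove the lemma but cites it from \cite{sieved}, and your argument coincides with the proof given there. You correctly identified the one point that needs care: the factor $\exp(A\log\log y)$ must have \emph{no} additive constant inside the exponent, since $A$ can be as large as $H^M$; the lower cutoff $p>H^M$ (with $H>10$, $M>6$) supplies the negative term $-\log\log H^M\approx -2.6$ that absorbs the Mertens error, and $z<y$ for large $x$ closes the gap. Your estimate of the $X$-part via $\prod(1+A/(p(p-1)))-1\ll A\sum_{p>H^M}1/(p(p-1))\ll A/H^M$ likewise uses $A\le H^M$ to keep the exponent $O(1)$ so that $\exp(u)-1\ll u$. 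No gaps.
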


\medskip 

Now we are ready to prove Theorem 3.

\begin{proof}[Proof of Theorem \ref{th3} (i)] First, we notice that by making a change of variables, it is easy to see that	
$$
\E\Big|\S'\cap[-y,y]\Big|=\E\Big|\S''\cap[y,-y]\Big|.
$$
Thus, it suffices to prove the claims concerned with $\S'$. By linearity of expectation, we get
$$
\E\Big|\S'\cap[y,-y]\Big|=\sum_{-y\leq n\leq y}\P(n\in \S').
$$	
Now, since $\b$ is taken uniformly from $\Z/P(z)\Z$, for each fixed $n$ we have from the Chinese Remainder Theorem
$$
\P(n\in \S')=\prod_{p\leq z}\P(\b\notequiv -n \pmod{p})=\s,
$$
and (\ref{4.1}) follows.  
\end{proof}

\begin{proof}[Proof of Theorem \ref{th3} (ii)] We prove the claim for $i=1$ (the $i=3$ case can be handled similarly). Fix $H\in\HH$. The case $j=0$ is trivial, and we turn to $j=1$, which is
\begin{equation}\label{5.3}
\E\sum_{q\in \Q_{H,1}}\sum_{-(k+1)y<n\leq y}\bl(H; q,n)=
\left(1+O\left(\frac{1}{H^{M-2}}\right)\right)(K+2)y|\Q_{H,1}|. 
\end{equation}	 
By (\ref{2.11}), the left-hand side expands as
$$
\E\sum_{q\in\Q_{H,1}}\sum_{-(K+1)y<n\leq y}\frac{1_{\bAP'(KH;q,n)\subset\S'_2}}{\s_2^{|\bAP'(KH;q,n)|}}.
$$
Recall that, according to the definitions (\ref{2.8}) and (\ref{2.9}), $\b_1$ and $\b_2$ are independent, and so are $\bAP'(KH; q,n)=\{n+qh: 1\leq h\leq KH\}\cap \S'_1$ and $\S'_2$. Then the above expression equals
$$  
\sum_{q\in\Q_{H,1}}\sum_{-(K+1)y<n\leq y}\sum_{b_1\pmod {P_1}}\frac{\P(\b_1=b_1)}{\s_2^{|\AP'(KH;q,n)|}}\P(\AP'(KH; q,n)\subset\S'_2).
$$
For fixed $q$, $n$, and $b_1$, we apply Lemma \ref{lem5.1} to the (deterministic) sets $\U=\AP'(KH;q,n)$ and $\V=\{n+qh: 1\leq h\leq KH\}$, and find that the left-hand side of (\ref{5.3}) is equal to
$$
\sum_{q\in \Q_{H,1}}\sum_{-(K+1)y<n\leq y}\left(1+O\left(H^{-(M-2)}+H^{-2}\sum_{\substack{1\leq h<h'\leq KH}}E_{2K^2H^2}(qh-qh';H) \right)\right).
$$
Now it is enough to show that, for any $1\leq h< h'\leq KH$,
$$
\sum_{q\in \Q_{H,1}}E_{2K^2H^2}(qh-qh';H) \ll \frac{|\Q_{H,1}|}{H^{M-2}}.
$$
For future reference, we show the more general bound
\begin{equation}\label{5.4}
\sum_{q\in Q_{H,i}}	E_{8K^2H^2}(qr+s;H) \ll \frac{|Q_{H,i}|}{H^{M-2}}
\end{equation}
for each $i\in\{1,3\}$, $0<|r|\leq KH$, and any integer $s$. Note that $E_A(m;H)$ is an increasing function of $A$.

To show (\ref{5.4}), we fix $r$ and $s$. For any $d\in\D_{H\setminus\{1\}}$, all prime divisors of $d$ are greater than $H^M>KH\leq |r|$, and so $r$ and $d$ are coprime. Hence, the congruence $qr\equiv a\pmod{d}$ holds for at most one residue class $q\pmod d$. Therefore, for $d\leq y^{1/2}$, we have by Brun-Titchmarch inequality (recall that $H\leq (\log y)^{1/2}$ by (\ref{2.4}))
$$
\#\{q\in \Q_{H,i}: qr\equiv a\pmod d\} \ll \frac{y/H}{\varphi(d)\log(y/d)} \ll \frac{y/H}{\varphi(d)\log y}.
$$
For $d>y^{1/2}$, we can forget that $q$ is restricted to be prime and trivially get
$$
\#\{q\in \Q_{H,i}: qr\equiv a\pmod d\} \ll \frac{y/H}{d} + 1 \ll \frac{y^{1/2}}{H}.
$$
So for each $d$ we have
$$
\#\{q\in \Q_{H,i}: qr\equiv a\pmod d\} \ll \frac{y/H}{\varphi(d)\log y}+\frac{y^{1/2}}{H}.
$$
Hence, by Lemma \ref{lem5.2} we get, using (\ref{2.4}) again,
$$
\sum_{q\in Q_{H,i}}	E_{8K^2H^2}(qr+s;H) \ll \frac{y/H}{\log y}\frac{H^2}{H^M}+\frac{y^{1/2}}{H}\exp(O(H^2\log\log y)) \ll \frac{|\Q_{H,i}|}{H^{M-2}},	
$$
since $|\Q_{H,i}|\asymp \frac{y/H}{\log y}$ for each $i\in\{1,3\}$. So, (\ref{5.4}) is proved, and
thus the case $j=1$ of Theorem \ref{th3} (ii) follows.

Now we turn to the case $j=2$ of (ii), which is 
$$
\E\sum_{q\in \Q_{H,1}}\left(\sum_{-(K+1)y<n\leq y}\bl(H; q,n)\right)^2=
\left(1+O\left(\frac{1}{H^{M-2}}\right)\right)(K+2)^2y^2|\Q_{H,1}|. 
$$ 
The left-hand side is expanded as
$$
\E\sum_{q\in\Q_{H,1}}\sum_{-(K+1)y<n_1,n_2\leq y}\bl(H;q,n_1)\bl(H;q,n_2). 
$$ 
We first note that, for each fixed $q$, the contribution of the pairs $(n_1,n_2)$ for which $|n_1-n_2|\leq KH$ is negligible: indeed, there are $O(yH)$ such pairs, and each of them contributes at most $\s_2^{-2KH}=y^{o(1)}$, so the total contribution of such pairs is $O(y^{1+o(1)}|\Q_{H,1}|)$. Thus we may restrict our attention to those pairs $(n_1,n_2)$ for which the sets $\{n_\nu+qh: 1\leq h\leq KH\}$, $\nu=1,2$, do not intersect; let us call these pairs \textit{good}. Then it is enough to show that
\begin{multline}\label{5.5}
\E\sum_{q\in\Q_{H,1}}\sum_{\substack{-(K+1)y<n_1,n_2\leq y\\(n_1,n_2) \text {good}}}\frac{1_{\bAP'(KH;q,n_1)\cup \bAP'(KH;q,n_2)\subset\S'_2}}{\s_2^{|\bAP'(KH;q,n_1)|+|\bAP'(KH;q,n_2)|}}\\
=\left(1+O\left(\frac{1}{H^{M-2}}\right)\right)(K+2)^2y^2|\Q_{H,1}|.
\end{multline}
Arguing as in the case $j=1$, for any realization $b_1$ of $\b_1$ and any good pair $(n_1,n_2)$, we can apply Lemma \ref{lem5.1} with
$$
\U=\AP'(KH;q,n_1)\sqcup \AP'(KH;q,n_2)
$$ 
and 
$$
\V=\{n_1+qh: 1\leq h\leq KH\}\sqcup 
\{n_2+qh: 1\leq h\leq KH\}.
$$
Then, since 
$$
|\U|=|\AP'(KH;q,n_1)|+|\AP'(KH;q,n_2)|,
$$
and $|\V|= 2\lfloor KH\rfloor$, we see that the left-hand side of (\ref{5.5}) equals 
\begin{multline*} 
\sum_{q\in\Q_{H,1}}\sum_{\substack{-(K+1)y<n_1,n_2\leq y\\(n_1,n_2) \text {good}}}\Biggl(1+O\Biggl(\frac{1}{H^{M-2}}+H^{-2}\sum_{1\leq h,h'\leq KH}1_{h\neq h'}E_{8K^2H^2}(qh-qh';H)+\\
+1_{n_1\neq n_2}E_{8K^2H^2}(n_1-n_2+qh-qh';H)\Biggr)\Biggr).
\end{multline*}
Recalling that all but $O(yH)$ pairs $(n_1,n_2)$ are good, we get the main term $(K+2)^2y^2|\Q_{H,1}|$ from here. We also obtain acceptable error terms using (\ref{5.4}), except for the summands with $h=h'$. To handle them, we note that for any fixed $n_2$, any positive integer $d$ and $a\pmod{d}$, 
$$
\#\{-(K+1)y<n_1\leq y: n_1-n_2 \equiv a\pmod{d}\} \ll \frac{y}{d}+1.
$$
Thus, by Lemma \ref{lem5.2}
$$
\sum_{-(K+1)y<n_1, n_2\leq y}E_{8K^2H^2}(n_1-n_2;H) \ll \frac{y^2}{H^{M-2}}+y\exp(O(H^2\log\log y)) \ll \frac{y^2}{H^{M-2}}
$$
by (\ref{2.4}). Thus, (\ref{5.5}) and the claim for $j=2$ follow. 
\end{proof}

\begin{proof}[Proof of Theorem \ref{th3} (iii)] Fix $H$. We prove only (\ref{4.3}), since (\ref{4.4}) can be handled in absolutely similar manner. The case $j=0$ follows from part (i) (that is, (\ref{4.1})), so we focus on the case $j=1$, which is
$$
\E\sum_{n\in \S'\cap[-y,y]}\sum_{q\in \Q_{H,1}}\sum_{h\leq KH}\bl(H;q,n-qh) = \left(1+O\left(\frac{1}{H^{M-2}}\right)\right)|\Q_{H,1}|\cdot\lfloor KH\rfloor\s_1 (2y+1).		
$$	
It is enough to show that, for any $h\leq KH$,
\begin{equation}\label{5.6}
\E\sum_{n\in \S'\cap[-y,y]}\sum_{q\in \Q_{H,1}}\bl(H;q,n-qh) = \left(1+O\left(\frac{1}{H^{M-2}}\right)\right)|\Q_{H,1}|\s_1 (2y+1).
\end{equation}
According to (\ref{2.11}), the left-hand side is equal to
$$
\E\sum_{n\in\S'\cap[-y,y]}\sum_{q\in \Q_{H,1}}\frac{1_{\bAP'(H;q,n-qh)\subset \S'_2}}{\s_2^{|\bAP'(H;q,n-qh)|}}
$$
By (\ref{2.85}), the condition $n\in \S'\cap[-y,y]$ implies that $n\in\S'_1\cap[-y,y]$. On the other hand, if $n\in\S'_1$, then $n\in\bAP'(H;q,n-qh)$, and thus the condition $n\in\S'_2$ is contained in the condition $ \AP'(H;n,n-qh)\subset\S'_2$. So the left-hand side of (\ref{5.6}) can be rewritten as
$$
\E\sum_{n\in\S'_1\cap[-y,y]}\sum_{q\in \Q_{H,1}}\frac{1_{\bAP'(H;q,n-qh)\subset \S'_2}}{\s_2^{|\bAP'(H;q,n-qh)|}}.
$$
Recalling that $\S_2'$ is independent of $\S_1'$ and of $\bAP'(H;q,n-qh)$, we may apply Lemma \ref{lem5.1} as before and find that the left-hand side of (\ref{5.6}) is
$$
\E\sum_{n\in\S'_1\cap[-y,y]}\sum_{q\in \Q_{H,1}}\Bigg(1+O\Bigg(\frac{1}{H^{M-2}}+H^{-2}\sum_{\substack{h',h''\leq KH\\HH'\neq h''}}E_{2K^2H^2}(qh'-qh'')\Bigg)\Bigg).
$$
Now since
\begin{equation}\label{5.7}
\E\Big|\S'_1\cap[-y,y]\Big|=\s_1(2y+1),	
\end{equation}
we see that (\ref{5.6}) follows from (\ref{5.4}).

\medskip

Now we turn to the case $j=2$ of (iii), which is 
\begin{multline*}
\sum_{h_1,h_2\leq KH}\E\sum_{n\in\S'_1\cap[-y,y]}\sum_{q_1,q_2\in\Q_{H,1}}\bl(H;q_1,n-q_1h_1)\bl(H;q_2,n-q_2h_2)\\
=\left(1+O\left(\frac{1}{H^{M-2}}\right)\right)|\Q_{H,1}|^2\cdot\lfloor KH\rfloor^2\frac{\s_1}{\s_2}y.	
\end{multline*}	
By (\ref{2.12}), the left-hand side is 
\begin{equation}\label{5.8}
\sum_{h_1,h_2\leq KH}\E\sum_{n\in\S'\cap[-y,y]}\sum_{q_1,q_2\in \Q_{H,1}}\frac{1_{\bAP'(H;q_1,n-q_1h_1)\cup \bAP'(H;q_2,n-q_2h_2)\subset \S'_2}}{\s_2^{|\bAP'(H;q_1,n-q_1h_1)|+|\bAP'(H;q_2,n-q_2h_2)|}}
\end{equation}
Note that by (\ref{2.4}) and (\ref{5.7}) the contribution from $q_1=q_2$ is
$$ 
\ll H^2\s_2^{-2KH}|\Q_{H,1}|\s_1y\leq |\Q_{H,1}|^2y^{o(1)}, 
$$
which is an acceptable error term. Now if $q_1\neq q_2$, then the set
$$
\bAP'(H;q_1,n-q_1h_1)\cup \bAP'(H;q_2,n_2-q_2h_2)
$$
has size $|\bAP'(H;q_1,n-q_1h_1)|+|\bAP'(H;q_2,n-q_2h_2)|-1$, 
since $n$ is the only common element of these two progression (recall that $q_1$ and $q_2$ are primes $\gg y/H$ and $h_1,h_2\ll H=y^{o(1)}$). As before, we can take the summation in (\ref{5.8}) over $n\in\S'_1\cap[-y,y]$, and then apply Lemma \ref{lem5.1} to rewrite the terms in (\ref{5.8}) with $q_1\neq q_2$ as
$$
\lfloor KH\rfloor^2\s_2^{-1}\E\sum_{n\in\S'_1\cap[-y,y]}\sum_{\substack{q_1,q_2\in \Q_{H,1}\\ q_1\neq q_2}}\left(1+O\left(\frac{1}{H^{M-2}}+\frac{E'(q_1)+E'(q_2)+E''(q_1,q_2)}{H^2}\right)\right),
$$
where 
$$
E'(q)=\sum_{\substack{h,h'\leq KH \\ h\neq h'}} E_{8K^2H^2}(qh-qh';H)
$$
and
$$
E''(q_1,q_2)=\sum_{\substack{h_1',h_2'\leq KH \\ h_1'\neq h_1, h_2'\neq h_2}} E_{8K^2H^2}(q_1h_1'-q_1h_1-q_2h_2'+q_2h_2;H)
$$
The contribution of $E'(q_1)+E'(q_2)$ is acceptably small as we already saw in the proof of the case $j=1$. Finally, we need to show that
$$
\sum_{q_1,q_2\in\Q_{H,1}}E_{8K^2H^2}(q_1h_1'-q_1h_1-q_2h_2'+q_2h_2;H) \ll \frac{|\Q_{H,1}|}{H^{M-2}}
$$
for each $h_1',h_2'$ with $h_1'\neq h_1$ and $h_2'\neq h_2$. It follows from (\ref{5.4}) applied to the sum over $q_1$ with $r=h_1'-h_1$ and $s=-q_2h_2'+q_2h_2$ (and then summing over all $q_2$). This completes the proof of the case $j=2$, and Theorem \ref{th3} follows.
\end{proof}

\end{document}